\newcommand{\new}{\newcommand*}
\new{\rnew}{\renewcommand*}
\new{\newe}{\newenvironment*}
\new{\newt}{\newtheorem}
\new{\stl}{\setlength}
\new{\bea}{\begin{eqnarray}}
\new{\eea}{\end{eqnarray}}
\new{\be}{\begin{equation}}
\new{\ee}{\end{equation}}
\new{\bean}{\begin{eqnarray*}}
\new{\eean}{\end{eqnarray*}}
\new{\no}{\nonumber}
\new{\bt}{\begin{theorem}}
\new{\et}{\end{theorem}}
\new{\bl}{\begin{lemma}}
\new{\el}{\end{lemma}}
\new{\bc}{\begin{corollary}}
\new{\ec}{\end{corollary}}
\new{\bp}{\begin{proof}\quad}
\new{\ep}{\end{proof}}
\new{\ba}{\begin{array}}
\new{\ea}{\end{array}}
\rnew{\theequation}{\thesection.\arabic{equation}}
\new{\sect}[1]{ \section{#1}
\setcounter{equation}{0} \setcounter{figure}{0} }
\def \endproof {\qquad \vrule height 5pt width 5pt depth 2pt}
\title{Beurling's Theorem And Invariant Subspaces\\ For The Shift On Hardy Spaces)\date{}}
\author{{Zhijian Qiu } \\
\\{\small\em  Department of  mathematics,  Southwestern Univ of Finance and Economics}
\\{\small\em  Chengdu 610072,  China, E-mail:qiu@swufe.edu.cn }}
\begin{document}

\maketitle

\begin{abstract}

Let $G$ be a bounded open subset in the complex plane and let
$H^{2}(G)$ denote the Hardy space on $G$. We call a  bounded  simply
connected domain $W$ perfectly connected if the boundary value
function of the inverse of the Riemann map from $W$ onto the unit
disk $D$ is almost 1-1 rwith respect to the Lebesgure on $\partial
D$ and if the Riemann map belongs to the weak-star closure of the
polynomials in $H^{\infty}(W)$. Our main theorem states: In order
that for each $M\in Lat( M_{z})$, there exist $u\in H^{\infty}(G)$
such that $ M = \vee\{u H^{2}(G)\}$, it is necessary and sufficient
that the following hold:

1) Each component of $G$ is a perfectly connected domain.

2)  The harmonic measures of the components of $G$ are mutually
singular.

3) 
  The set of polynomials is weak-star dense in $ H^{\infty}(G)$.

\noindent Moreover, if $G$ satisfies these conditions,
 then  every $M\in Lat( M_{z})$ is of the form $u H^{2}(G)$,
where 
the components of $G$ is either an inner function or zero.
\end{abstract}

\begin{keywords} Hardy space, invariant subspace, shift operator
\end{keywords}
\begin{AMS} 30H05  30E10, 46E15, 47B20
\end{AMS}


\section*{Introduction}
In \cite{beur}  A. Beurling completely described all the invariant
subspaces for the operator "multiplication by z" on the classical
Hardy space $H^{2}(D)$ on the unit disk $D$. $H^{2}(D)$ consists of
analytic functions on $D$ whose coefficients of Taylor expansion are
square summable. A closed linear subspace $M$ of $H^{2}(D)$ is said
z-invariant (or shift invariant) if $ z f \in M $ for every $f\in
M$.

Beurling's theorem states that every z-invariant subspace of
$H^{2}(D)$  is either zero subspace $\{0\}$ or of the form $u
H^{2}(D)$, where $u$ is an inner function in $H^{2}(D)$, that is, a
bounded analytic function on $D$ with nontangential boundary values
of modulus 1 almost everywhere respect to  the Lebesgue measure on
the unit circle $\partial D$.

Beurling's theorem is viewed as one of the most celebrated theorems
in operator theory and it has been extended to many directions.
 Unfortunately, the Beurling type theorem does not hold for the shift
(the operator of multiplication by $z$) on the Hardy spaces on an
arbitrary domain. The lattice of the invariant subspaces of the
shift is unknown in general.  In the case of Bergman space
 on the unit disk, a theorem of Aleman, Richter and Sundberg says that if $M$
is an invariant subspace for the Bergman shift  (which is the
operator of the multiplication by $z$) on the Bergman space
$L^{2}_{a}(G)$, which consists of the analytic functions on $D$ that
are square integrable with respect to the area measure, then $M$ is
generated by $M\ominus z M$, the orthogonal complement of $zM$ in
$M$. This theorem is regarded as the Beurling's theorem for the
Bergman shift. Comparing to the case of the shift on $H^{2}(D)$,
this result does not tell what is $M$.
 Unlike the Hardy space case, the dimension of $M \ominus z
M$ could be any positive integer or $\infty$ (\cite{abfp, hrs}) and
characterization of the invariant subspaces for the Bergman shift is
really difficult.   The lattice of invariant subspaces of the
Bergman shift is unknown.

If  we have our attention back to the Hardy spaces,  then we may
ask: for which  Hardy space  $H^{2}(G)$ it is possible to describe
the lattice of invariant subspaces of the shift operator?

For this question, let us begin with the definition of the Hardy
spaces. For a bounded domain $G$ in the complex plane and a positive
number $q $ in $[1,\infty)$,
 let $ H^{q}(G)$ be the set of analytic
functions $f$ for which there is a function $u$ harmonic on $G$ such
that $|f(z)|^{q} \leq u(z)$ on $G$. Fix a point $a\in G$, then
\[ \|f\| = \inf\{u(a)^{\frac{1}{q}}:
 |f(z)|^{q} \leq u(z),\hspace{.03in} z\in G\hspace{.03in}
\mbox{and}\hspace{.03in}u\hspace{.03in} \mbox{is harmonic on}
\hspace{.03in} G\}
\]
defines a norm on $H^{q}(G)$, which is called   the Hardy space on
$G$ (see \cite{dure}). By the Harnack's inequality, the norms
induced by different points are equivalent.

 If $G$ is a bounded open subsets with
components $G_{1}, G_{2}, ... G_{n}, ... $, let $H^{q}(G)$ denote
the collection  of the  functions to $G$ whose restriction to
$G_{n}$ belongs
 to $H^{q}(G_{n})$ for each $n$. If we define the addition on $H^{q}(G)$
by $ (f+g) (z) = f(z) + g(z) $ pointwise on $G$ and the scalar
multiplication in the obvious way, then $H^{q}(G)$ is a linear space
over the complex field. Moreover, it is easy to verify that
\[\|f\|_{H^{q}(G)}= \{ \sum_{n=1}
 \frac{1}{2^{n}} \|f|G_{n}\|^{q}_{H^{q}(G_{n})} \}^{\frac{1}{q}}\hspace{.1in}
\mbox{for each} \hspace{.1in}f\in H^{q}(G) \] defines a norm on
$H^{q}(G)$ that makes it a complex Banach space of analytic
functions on $G$.

For $q=\infty$, $H^{\infty}(G)$  denotes the Banach algebra of the
bounded analytic functions on $G$ with the supremum norm.

In this paper, we study the invariant space problem for the shift
operator on Hardy space $H^{2}(G)$ over a general open subset set
$G$.
 We investigate the maximum extent of Beurling's theorem in its original
form in Hardy spaces. In other words, we try to find the most
general open subsets
 $G$ in the complex
plane so that the invariant subspace for the shift on $H^{2}(G)$
have as simple forms as those for the shift on $H^{2}(D)$.

The main result of this paper,  Theorem~\ref{t:main}, gives
 a complete description of such open subsets in the plane.
Our characterization is in terms of harmonic measure and perfectly
connected domain as well as weak-star density of polynomials in
$H^{\infty}(G)$.

In Section 1 we give some preliminaries.  In Section 2, we study
perfectly connected domains and the related  weak-star density
problem. In Section 3, we present our main theorem in this paper.

\section{A structure theorem for the  mean closure of the rational functions}
 Let $G$ be a bounded domain with no singleton  boundary component.
 For a fixed $z\in G$, there is a unique probability
measure $\omega_{z}$ supported on $\partial G$ such that
\[u(z) = \int_{\partial G} u(\lambda) d \omega_{z}(\lambda)
\]
for every function $u(\lambda)$ that is continuous on $\overline G$
and harmonic on $G$. The measure $\omega_{z}$ is called the harmonic
measure of $G$ evaluated at $z$. By the Harnack's inequality, any
two harmonic measures for $G$ are boundedly equivalent. The
normalized Lebesgue on $\partial D$ is the harmonic measure
evaluated at 0 for $D$.

For an arbitrary open subset $G$, let $\{G_{i}\}_{i=1}^{\infty}$ be
the collection of  the components of $G$. Then a harmonic measure
for $G$ is defined to be $\omega=\sum_{i=1}^{\infty}
\frac{\omega_{i}}{2^{i}}$, where $\omega_{i}$ is a harmonic measure
for $G_{i}$. The constants $\frac{1}{2^{i}}$, $i=1,2, ...$, are
chosen
 so that $\omega$ become a probability measure on $G$.

Let $K$ be a compact subset in the plane and let $\mu$ be a positive
finite Borel measure supported on $K$. Let $R^{2}(K,\mu)$ denote the
closure of $Rat(K)$, the set of
 the rational functions with poles off $K$, in $ L^{q}(\mu)$.

Let $\cal P$ denote the set of (analytic) polynomials and
$P^{q}(\mu)$ denote the closure of $\cal P$ in $L^{2}(\mu)$.

For $q\in [1,\infty)$, a point $w$ in {\bf C} is called an
 analytic bounded point evaluation
($abpe$) for $R^{q}(K,\mu)$ if there is a neighborhood $G$ of $w$
and a positive constant $c$ such that  for $\lambda \in G$
\[|f(\lambda)| \leq c \hspace{.03in}
 \|f\|_{L^{q}(\mu)},  \hspace{.2in} f \in Rat(K).
\]
So the map, $f \rightarrow f(\lambda)$, extends to a functional in
$R^{q}(K,\mu)^{*}$, the dual space of $R^{q}(K,\mu)$.
 Thus, there is a (kernel) function $k_{\lambda}$
 in $R^{q}(K,\mu)^{*}$ such that
\[f(\lambda) = \int f k_{\lambda} \hspace{.05in}d\mu,\hspace{.1in}
f \in Rat(K).
\]
 Clearly, the set of $abpe$s is open.
For each $f \in R^{q}(K,\mu)$, let $\hat{f}(\lambda) =
 \int f k_{\lambda} \hspace{.05in}d\mu$.
Then $\hat{f}(\lambda)$ is analytic on the set of $abpe$s.

We use $\nabla R^{q}(K,\mu)$ to denote the set of  $abpe$s for
$R^{q}(K,\mu)$.

The proof of our main theorem needs the following structure theorem
for $R^{q}(K,\mu)$, which can be found in \cite{appr}.
Recall that the connectivity of a connected domain  is defined to be
the number of the components of its complement in the complex plane.
A connected open subset is called circular domain if its boundary
consists of a finite number of disjoint circle.

\begin{thm} \label{t:0}
Let $K$ be a compact subset whose complement in the plane has only
finitely many components and let $\mu$ be a positive finite Borel
measure supported on $K$. Then there is a Borel partition $\{
\Delta_{n}\}_{n=0}^{\infty} $ of the support of $\mu$ such that
 \[ R^{q}(K,\mu) = L^{q}(\mu | \Delta_{0})  \oplus R^{q}(K, \mu | \Delta_{1})
 \oplus ... \oplus R^{q}(K, \mu | \Delta_{n}) \oplus ...
\]
and for each $n \geq 1$, if $U_{n} $ denotes $ \nabla R^{q}(K,
\mu|\Delta_{n})$,  then

1)
$\overline{U}_{n} \supset \Delta_{n} $ and
 $R^{q}(K, \mu | \Delta_{n}) = R^{q}(\overline{U}_{n}, \mu | \Delta_{n})$;


2) the map $e$, defined by $e(f)=\hat{f}$, is an isometrical
isomorphism and weak-star homeomorphism from
 $R^{q}(K, \mu|\Delta_{n}) \bigcap L^{\infty}(\mu|\Delta_{n})$
onto $H^{\infty}(U_{n})$.

3)  $U_{n}$ is conformally equivalent to a circular domain and the
connectivity of $U_{n}$ does not exceed the connectivity of the
component of $G$ that contains $U_{n}$;

4) $\mu| \partial U_{n} \ll \omega_{U_{n}}$, the harmonic measure of
$U_{n}$; and if $u_{n}$ is a conformal map from a circular domain
$W_{n}$ onto $U_{n}$, then every $f\in H^{\infty}(U_{n})$ has
nontangential limits on $\partial U_{n}$ {\em a.e.} $[\mu] $ and
\[ e^{-1}(f)(a)=\lim_{z\rightarrow u_{n}^{-1}(a)} f\circ u_{n}(z) \hspace{.05in}
\mbox{\em{for almost every} on} \hspace{.05in}a\in
\partial G\hspace{.02in}\mbox{
with respect to }\hspace{.05in}  \mu|\partial U_{n};
\]

5)         for  each $f\in H^{\infty}(U_{n})$, if let $f^{*}$  be
equal to its nontangential limit values on $\partial U_{n}$  and
$f^{*} = \hat{f}$ on $U_{n}$, then the map $m$, defined by $m(f) =
f^{*}|\Delta_{n}$, is the inverse of the map $e$.
\end{thm}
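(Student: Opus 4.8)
The plan is to rest the whole theorem on one deep analytic input — the rational-function analogue of Thomson's theorem on the existence of analytic bounded point evaluations, valid when $\mathbb{C}\setminus K$ has finitely many components — and to derive everything else from it by an abstract splitting argument together with classical one-variable function theory (Koebe uniformization by circular domains, the F.\ and M.\ Riesz theorem, and Hardy space theory on finitely connected domains). I would proceed in four stages: (i) split $R^q(K,\mu)$ into a ``free'' $L^q$-summand and countably many irreducible summands; (ii) identify each irreducible summand with $H^\infty$ of its set of $abpe$'s via the map $e$; (iii) describe that set geometrically; (iv) read off the boundary statements. For (i), call a restriction $\mu|\Delta$ \emph{irreducible} if the only Borel $\sigma\subseteq\Delta$ with $\chi_\sigma\in R^q(K,\mu|\Delta)$ are the $\mu$-trivial ones. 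Because $R^q(K,\mu)\cap L^\infty(\mu)$ is an algebra stable under the bounded rational functions, the Borel sets whose characteristic functions lie in $R^q(K,\mu)$ form a $\sigma$-algebra, so a maximality/exhaustion argument yields a Borel partition $\{\Delta_n\}_{n\ge0}$ with $R^q(K,\mu|\Delta_0)=L^q(\mu|\Delta_0)$ and each $\mu|\Delta_n$, $n\ge1$, irreducible and not of this degenerate $L^q$-type; since each $\chi_{\Delta_n}\in R^q(K,\mu)$, the space splits as the asserted $\ell^q$-type (orthogonal when $q=2$) direct sum. The point that makes the $\Delta_n$ with $n\ge1$ genuinely present — that an irreducible $\mu|\Delta_n$ which is not all of $L^q$ must carry an $abpe$ — is exactly the rational analogue of Thomson's theorem, whose proof uses Vitushkin-type constructive rational approximation and estimates on analytic capacity; this I would quote.

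For stages (ii) and (iii), fix $n\ge1$ and put $U_n=\nabla R^q(K,\mu|\Delta_n)$; irreducibility forces $U_n$ connected, hence a domain. The map $e(f)=\widehat f$ is an algebra homomorphism into $\mathrm{Hol}(U_n)$, and from $|\widehat{f^{m}}(\lambda)|\le c\,\|f^{m}\|_{L^q(\mu)}\le c\,\mu(\Delta_n)^{1/q}\|f\|_{\infty}^{m}$, taking $m$-th roots gives $\sup_{U_n}|\widehat f|\le\|f\|_{L^\infty(\mu)}$, so $e$ maps $R^q(K,\mu|\Delta_n)\cap L^\infty(\mu|\Delta_n)$ contractively into $H^\infty(U_n)$. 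Irreducibility also forces the part of $\mu|\Delta_n$ lying outside $\overline{U_n}$ to be null, so $\Delta_n\subseteq\overline{U_n}$ and $R^q(K,\mu|\Delta_n)=R^q(\overline{U_n},\mu|\Delta_n)$, which is (1). Since $\mathbb{C}\setminus K$ has finitely many components, so does $\mathbb{C}\setminus\overline{U_n}$ (by (1) plus a standard argument on the location of $abpe$'s), and a connectivity count bounds the number of holes of $U_n$ by that of the ambient component of $G$, giving (3); in particular $U_n$ is a finitely connected domain with no degenerate boundary component, so by Koebe's theorem it is conformally equivalent to a circular domain $W_n$ through a map $u_n$. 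Transplanting through $u_n$, $R(\overline{U_n})$ is weak-$*$ dense in $H^\infty(U_n)$, which together with $R^q(K,\mu|\Delta_n)=R^q(\overline{U_n},\mu|\Delta_n)$ shows $e$ is onto $H^\infty(U_n)$; the maximum principle on $U_n$ upgrades the contraction to an isometry, and a bijective isometry between these dual Banach spaces automatically carries one weak-$*$ topology to the other, so $e$ is a weak-$*$ homeomorphism. This proves (2) and (3).

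For stage (iv), an F.\ and M.\ Riesz-type argument transplanted through $u_n$ (using that $u_n'$ and $1/u_n'$ lie in the appropriate Hardy classes of $W_n$) shows that the part of $\mu|\partial U_n$ that is singular with respect to $\omega_{U_n}$ would — since $R^q$ of a measure on $\partial W_n$ singular to arc length is all of $L^q$ — split off an extra free $L^q$-summand inside $\Delta_n$, contradicting irreducibility; hence $\mu|\partial U_n\ll\omega_{U_n}$. Pulling $e^{-1}(f)=\widehat f$ back through $u_n$ makes it a bounded analytic function on the circular domain $W_n$, which by the Hardy space theory on such domains has nontangential limits $\omega_{W_n}$-a.e., hence $\mu$-a.e.\ on $\partial U_n$ after transport; this gives the limit formula of (4). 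Identifying those boundary limit values with the $\mu$-class of $f$ on $\partial U_n$, and setting $f^*=\widehat f$ on $U_n$, produces the function $f^*$ of (5) with $m(f)=f^*|\Delta_n$ inverse to $e$.

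The main obstacle is stage (i): producing an $abpe$ for an irreducible $R^q(K,\mu)$ that is not all of $L^q(\mu)$. This is exactly where the full strength of Thomson's coloring-scheme machinery, and its extension to rational approximation over sets with finitely connected complement, is needed, and it is the only genuinely hard step. A secondary technical point is the surjectivity of $e$ onto the \emph{entire} algebra $H^\infty(U_n)$, which relies on the nice (circular) geometry of $U_n$ and on the weak-$*$ density of $R(\overline{U_n})$ in $H^\infty(U_n)$. Everything downstream of the conformal uniformization — the boundary absolute continuity and the existence of nontangential limits — is classical one-variable Hardy space theory transplanted from the circular model $W_n$.
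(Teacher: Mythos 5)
First, a point of reference: the paper itself contains no proof of Theorem~\ref{t:0}. It is quoted verbatim from \cite{appr} (the author's separate paper on the structure of $R^{2}(K,\mu)$), with the remark that Thomson \cite{jm} proved the $P^{2}(\mu)$ case of everything except 4) and 5). So your proposal cannot be matched against an argument in this paper; it has to be judged as a reconstruction of the cited result. As such, your overall architecture is the standard one (partition into an $L^{q}$ summand plus irreducible summands, existence of $abpe$'s as the deep quoted input, identification of each irreducible summand with $H^{\infty}$ of its $abpe$ set, then boundary behaviour via conformal transplantation), and you are right that the only step requiring Thomson's colouring machinery is the production of an $abpe$ for an irreducible non-$L^{q}$ summand.

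There are, however, genuine gaps in the later stages that are not routine deductions. The most serious is your claim that ``the maximum principle on $U_{n}$ upgrades the contraction to an isometry.'' The maximum principle (via your power trick) gives only $\sup_{U_{n}}|\hat f|\leq\|f\|_{L^{\infty}(\mu)}$; the reverse inequality $\|f\|_{L^{\infty}(\mu|\Delta_{n})}\leq\sup_{U_{n}}|\hat f|$ is precisely the statement that $f$ is recovered $\mu$-a.e.\ on $\Delta_{n}$ from the nontangential boundary values of $\hat f$ --- that is, part 5) of the theorem --- which you only establish afterwards, so your argument for 2) is circular as written. In Thomson's treatment this ``no hidden mass'' statement is one of the principal difficulties, not a consequence of the open-mapping theorem. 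Two further points need justification rather than assertion: (a) the weak-star density of $R(\overline{U_{n}})$ in $H^{\infty}(U_{n})$, which you use for surjectivity of $e$, is itself a nontrivial pointwise-bounded approximation theorem (Gamelin--Garnett \cite{gg}) and requires knowing that $U_{n}$ is sufficiently nice; and (b) the conformal equivalence of $U_{n}$ with a circular domain requires that no boundary component of $U_{n}$ degenerates to a point, which does not follow merely from the finiteness of the connectivity and is not addressed. Your F.\ and M.\ Riesz argument for $\mu|\partial U_{n}\ll\omega_{U_{n}}$ is the right idea but likewise needs the boundary-value correspondence of 4)--5) already in hand to conclude that a singular part would split off as a reducing $L^{q}$ piece.
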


An observation is that 3) in the theorem above implies that each
$R^{q}(\overline U_{n},\mu|\Delta_{n})$ contains no non-trivial
characteristic function (because $H^{\infty}(U_{n})$ does not).

For a given $\mu$, let $K$ be a closed disk that contains the
support of $\mu$, then $ R^{2}(K,\mu)=P^{2}(\mu)$ and
$R^{2}(K,\mu|\Delta_{i}) = P^{2}(\mu|\Delta_{i})$. So
Theorem~\ref{t:0} is also a structural theorem for $P^{2}(\mu)$. In
1991 J. Thomson first proved Theorem~\ref{t:0} for $P^{2}(\mu)$
except 4) and 5) in \cite{jm}. Some applications of
Theorem~\ref{t:0} can be found in \cite{quas, dens}.

\section{On Perfectly Connected Domains}
Following \cite{glic} we call a simply connected domain $G$ nicely
connected if the boundary value of a conformal map from the unit
disk $D$ onto $G$ is univalent almost everywhere on $\partial D$
with respect the Lebesgue measure. So every bounded analytic
function in $H^{\infty}(G)$
 has a well-defined boundary valuefunction  on $\partial G$ if $G$ is a nicely
connected domain.
 A simply connected domain $G$ is nicely connected
domain if and only if every function in $H^{\infty}(G)$ can be
approximated by a bounded sequence of functions analytic on $G$ and
continuous on $\overline G$ (for example, see \cite{davi}).

For a simply connected domain $G$, recall that the weak-star
topology for $H^{\infty}(G)$ is defined as follows:  let $m$ denote
the normalized Lebesgue measure and let $P^{\infty}(m)$ denote the
weak-star closure of $\cal P$ in  $L^{\infty}(m)$. The map $f
\rightarrow \tilde{f}$ (where $\tilde{f}$ denotes the boundary value
function of $f$ on $\partial D$) from $H^{\infty}(D)$ to
$P^{\infty}(m)$ is an isometrical isomorphism. Since $P^{\infty}(m)$
is the dual space of $L^{1}(m)/P^{\infty}(m)^{\perp}$, we have a
weak-star topology on $H^{\infty}(D)$ that is induced from
$P^{\infty}(m)$ via the isometrical isomorphism. For a bounded
simply connected domain $G$, a conformal map from $D$ onto $G$
induces an isometrical isomorphism from $H^{\infty}(D)$ onto
$H^{\infty}(G)$ in the obvious way. Thus the weak-star topology on
$H^{\infty}(G)$ is defined to be the one induced from
$H^{\infty}(D)$ by that map.

Now we introduce the following:

\vspace{.1in} \noindent {\bf Definition.} A bounded  simply
connected domain $G$ is called perfectly connected if the Riemann
map from $G$ onto $D$ belongs to the weak-star closure of the
polynomials in $H^{\infty}(G)$. \vspace{.1in}

\begin{thm} \label {t:a}
Let $G$ be a bounded simply connected domain and let $\omega$ be
 a harmonic measure for $G$. Then the following are equivalent:

 1) $G$ is a perfectly connected domain.

2) $G$ is a nicely connected domain and the boundary value function
of the Riemann map from $G$ onto $D$ belongs to
$P^{\infty}(\omega).$

3) The set of polynomials is weak-star dense in  $H^{\infty}(G)$.

\end{thm}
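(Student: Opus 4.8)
\quad The plan is to fix a Riemann map $\tau\colon D\to G$, put $\varphi=\tau^{-1}$ (the Riemann map of $G$ onto $D$), and transport everything to $L^{\infty}(m)$, where $m$ is normalized Lebesgue measure on $\partial D$. Since any two harmonic measures for $G$ are boundedly equivalent, $P^{\infty}(\omega)$ and the validity of (2) do not depend on the choice of $\omega$, so I take $\omega=\tau^{*}_{*}m$, the push-forward under the boundary value function $\tau^{*}$ of $\tau$. The maps $g\mapsto g\circ\varphi$ and $g\mapsto\tilde g$ are isometric algebra isomorphisms of $H^{\infty}(D)$ onto $H^{\infty}(G)$ and onto $P^{\infty}(m)$ respectively, and both are weak-star homeomorphisms — the first by the definition of the weak-star topology on $H^{\infty}(G)$, the second by the definition of that topology on $H^{\infty}(D)$. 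Composing them identifies $H^{\infty}(G)$ with $P^{\infty}(m)$ as weak-star algebras; under this identification each polynomial $p$ goes to $\widetilde{p\circ\tau}=p\circ\tau^{*}$ and $\varphi$ goes to the coordinate function $e^{i\theta}$. Let $\mathcal{B}$ be the weak-star closure in $L^{\infty}(m)$ of $\{\,p\circ\tau^{*}:p\in\mathcal{P}\,\}$; it is a weak-star closed subalgebra of $L^{\infty}(m)$ (the weak-star closure of a subalgebra is a subalgebra, multiplication on $L^{\infty}(m)$ being separately weak-star continuous), and $\mathcal{B}\subseteq P^{\infty}(m)$ because each $p\circ\tau^{*}=\widetilde{p\circ\tau}$ lies in $P^{\infty}(m)$. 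The identification above then turns (1) into the assertion $e^{i\theta}\in\mathcal{B}$ and (3) into the assertion $\mathcal{B}=P^{\infty}(m)$, so (1) $\Leftrightarrow$ (3) is immediate: if $e^{i\theta}\in\mathcal{B}$ then $\mathcal{B}$, being a weak-star closed subalgebra containing $1$ and $e^{i\theta}$, contains every polynomial, hence $\mathcal{B}\supseteq\overline{\mathcal{P}}^{\,w^{*}}=P^{\infty}(m)$; the reverse inclusion is automatic, and (3) trivially implies (1).

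Next I would bring in (2) under the extra hypothesis that $G$ is nicely connected. Then $\tau^{*}\colon\partial D\to\partial G$ is one-to-one $m$-almost everywhere, so $h\mapsto h\circ\tau^{*}$ is an isometric weak-star homeomorphism of $L^{\infty}(\omega)$ onto $L^{\infty}(m)$ carrying the weak-star closure $P^{\infty}(\omega)$ of $\mathcal{P}$ onto $\mathcal{B}$. Moreover $\varphi\in H^{\infty}(G)$ then has a well-defined boundary value function $\tilde\varphi\in L^{\infty}(\omega)$ — this is precisely the content of nice connectedness — with $\tilde\varphi=(\tau^{*})^{-1}$ a.e., so $\tilde\varphi\circ\tau^{*}=e^{i\theta}$. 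Hence, for a nicely connected $G$, the condition ``$\tilde\varphi\in P^{\infty}(\omega)$'' is equivalent to $e^{i\theta}\in\mathcal{B}$, i.e. to (1) and to (3). Since (2) by its very statement includes nice connectedness, this already gives (2) $\Rightarrow$ (3).

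It remains to prove (3) $\Rightarrow$ (2), for which — by the previous paragraph — it suffices to show that (3) forces $G$ to be nicely connected. From (3) we have $e^{i\theta}\in\mathcal{B}$; but every member of $\mathcal{B}$ is measurable with respect to the sub-$\sigma$-algebra of $\partial D$ generated by $\tau^{*}$ (the $L^{\infty}$ of that $\sigma$-algebra is weak-star closed in $L^{\infty}(m)$ and contains each $p\circ\tau^{*}$), so the coordinate function is $m$-a.e. a measurable function of $\tau^{*}$. A standard measure-theoretic argument — disintegrate $m$ along $\tau^{*}$ over $\omega$ and note that the conditional measures are then forced to be unit point masses — yields that $\tau^{*}$ is one-to-one $m$-a.e., i.e. $G$ is nicely connected. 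This last step is the one point in the argument that is not pure transport of structure; everything else is bookkeeping with the two canonical isomorphisms and the routine (but slightly fiddly) verification that each of them is a weak-star homeomorphism. Assembling the pieces: (1) $\Leftrightarrow$ (3) unconditionally; (2) $\Rightarrow$ nice connectedness $\Rightarrow$ [(2) $\Leftrightarrow$ (3)]; and (3) $\Rightarrow$ nice connectedness $\Rightarrow$ [(3) $\Leftrightarrow$ (2)]. Hence (1), (2) and (3) are equivalent.
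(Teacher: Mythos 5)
Your proposal is correct, but it is organized quite differently from the paper's proof, which runs the cycle 1) $\Rightarrow$ 2) $\Rightarrow$ 3) $\Rightarrow$ 1). You instead prove 1) $\Leftrightarrow$ 3) directly and then attach 2). Two ideas in your argument genuinely differ from the paper's. First, for 1) $\Rightarrow$ 3) you observe that the weak-star closure $\mathcal{B}$ of $\{p\circ\tau^{*}\}$ is a weak-star closed subalgebra (separate weak-star continuity of multiplication on $L^{\infty}(m)$), so once it contains $e^{i\theta}$ it contains every analytic polynomial and hence all of $P^{\infty}(m)$; the paper never uses this algebra trick and instead passes through 2), approximating $f\circ\varphi$ by polynomials $q_{\tau}$ and composing with $\psi\in P^{\infty}(\omega)$. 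Second, for the step ``(1) forces nice connectedness,'' the paper uses a Hilbert-space argument: weak-star convergence of $q_{\alpha}\circ\varphi$ to $z$ gives weak $L^{2}$-convergence, convexity upgrades this to norm convergence in $P^{2}(m)$, and an a.e.-convergent subsequence then shows $\varphi$ is univalent a.e.; you instead note that $L^{\infty}$ of the sub-$\sigma$-algebra generated by $\tau^{*}$ is weak-star closed (range of a weak-star continuous conditional-expectation projection), so $e^{i\theta}\in\mathcal{B}$ is a.e.\ a measurable function $F\circ\tau^{*}$ of $\tau^{*}$, and then $\tau^{*}(\zeta)=\tau^{*}(\zeta')$ forces $\zeta=F(\tau^{*}(\zeta))=F(\tau^{*}(\zeta'))=\zeta'$ on a set of full measure. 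Both arguments are sound; the paper's is more elementary and concrete, while yours isolates the structural content (``the coordinate function is a function of the boundary values of $\tau$'') and avoids extracting subsequences. Your handling of 2) — reducing it, once nice connectedness is in hand, to the same statement $e^{i\theta}\in\mathcal{B}$ via the isomorphism $h\mapsto h\circ\tau^{*}$ of $L^{\infty}(\omega)$ onto $L^{\infty}(m)$ — matches the paper's computation in substance, just stated more abstractly.
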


\begin{proof}

1) $\Longrightarrow$ 2). Suppose  that $G$ is a  perfectly connected
domain. Let $\psi$ denote the Riemann map from $G$ onto $D$ and let
$\varphi$ denote its inverse function.
 Then $\psi$ can be weak-star approximated by polynomials
in $H^{\infty}(G)$. Without loss of generality, we may assume that
$\omega = m\circ \psi$. Choose a net of polynomials $\{q_{\alpha}\}$
such that it weak-star converges to $\psi$ in $H^{\infty}(G)$. So
the net $\{q_{\alpha}\circ \varphi\}$ weak-star converges to $z$ in
$H^{\infty}(D)$. This is equivalent to that $\{q_{\alpha}\circ
\varphi\}$ weak-star converges to $z$ in $P^{\infty}(m)$, where we
still use $\varphi$ to denote the boundary value of $\varphi$ on
$\partial D$. We claim that $z$ belongs to the closure of $\{p\circ
\varphi: p\in \cal P\}$ in $P^{2}(m)$. In fact, recall that a convex
set is norm closed if and only if it is weakly closed. Since the
weak topology and weak-star topology coincide on a Hilbert space and
since $z$ belongs the weak-star closure of $\{p\circ \varphi: p \in
\cal P\}$ in $P^{\infty}(m)$,
 the claim clearly follows.

To show $G$ is nicely connected, choose a sequence of polynomials
$\{p_{n}\circ\varphi \}$  such that it converges to $z$ in
$P^{2}(m)$. By passing to a subsequence if necessary, we have that
$p_{n}\circ\varphi \rightarrow z$ almost everywhere on $\partial D$
with respect to $m$. This clearly implies that $\varphi$ is
univalent almost everywhere on $\partial D$. Thus, $G$ is nicely
connected. Therefore, every $f$ in $H^{\infty}(G)$ has a boundary
value function, $\tilde f$,  on $\partial G$.

Since $\{q_{\alpha}\circ \varphi\}$ weak-star converges to $z$ in
$P^{\infty}(m)$, we have that
\[ \lim_{\alpha}\int  q_{\alpha}\circ\varphi
 f dm = \int zf dm,\hspace{.2in} f\in L^{1}(m).
\]
So
\[ \lim_{\alpha}\int  q_{\alpha}(f\circ\psi) dm\circ\psi =
\int \psi(f\circ\psi)dm\circ\psi ,\hspace{.2in}
 f\in L^{1}(m).
\]
Since $L^{1}(\omega) = \{f\circ\psi: f\in L^{1}(m)\}$, it follows
that $ \lim_{\alpha} \int q_{\alpha} g d\omega = \int \psi g
d\omega,\hspace{.02in} f\in L^{1}(\omega). $ Therefore, $\psi$ is in
$P^{\infty}(\omega)$.

2) $\Longrightarrow $ 3). With the same notations as above. We
suppose that  $G$ is a nicely connected domain and $\psi$ belongs to
$P^{\infty}(\omega).$

Let $f\in H^{\infty}(G)$, for convenience, we still use $f$ to
denote its boundary value function. Then $f\circ\varphi \in
H^{\infty}(D)$. So there exists a net of polynomials $\{q_{\tau}\}$
such that it weak-star converges  to $f\circ\varphi$ in
$H^{\infty}(G)$. This, in turn, implies  that
$\{q_{\tau}\circ\psi\}$ weak-star converges to $f\in
L^{\infty}(\omega)$. Since $q_{\tau}\circ\psi$ belongs to
$P^{\infty}(\omega)$ for each $n$, it follows that $f\in
P^{\infty}(\omega)$. Hence $H^{\infty}(G) \subset
P^{\infty}(\omega)$. Obviously, $P^{\infty}(\omega )\subset
H^{\infty}(G)$, so we conclude that $P^{\infty}(\omega)
=H^{\infty}(G)$.  Therefore,
 $\cal P$ is weak-star dense in $H^{\infty}(G)$.

3) $\Longrightarrow $ 1) follows by the definition.

\end{proof}

\begin{rem}
There is no simple topological descrition for perfectly connected
domains, however, a Jordan domain (enclosed by a Jordan curve) is
perfectly connected and so is a Carathe\"{o}dory domain (a domains
whose boundary is equal to  the boundary of their complement in the
plane).

With Theorem~\ref{t:a}, we know that a perfectly connected domain is
the image of an bounded analytic function on $D$ known as
 a weak-star generator, which is characterized  by D. Sarason
in \cite{gen}.  Though a perfectly connect domain may look so 'bad'
(see \cite{gen, ord}),
 however, it represents the class of simply connected domains most
resemble to the unit disk from the view of the theory of Hardy
spaces. Our main theorem itself would be an example that reflects
this view.
\end{rem}

Let $1\leq q \leq \infty$ and let $G$ be a nicely connected domain.
Then each function in $H^{q}(G)$ has a well-defined boundary value
function on $\partial G$. For $f\in H^{q}(G)$, let $\tilde f$ denote
the boundary value function on $\partial G$. The map, $f\rightarrow
\tilde f$ is an isometrical isomorphism from $H^{q}(G)$ onto
$\widetilde{H}^{q}(G)$, which is defined to be the subspace
$\{\tilde {f}: f\in H^{q}(G)\}$ in $L^{q}(\omega)$ (where $\omega$
is chosen so that $\|r\|_{H^{q}(G)} = \|r\|_{L^{q}(\omega)}$ for
each $\in Rat(\overline G)$).

Now, for a bounded open subset, if each of the components of $G$ is
nicely connected and the harmonic measures of $G$ are mutually
singular, then every function $f$ in $H^{q}(G)$ also has a well
defined boundary value function $\tilde f$ on $\partial G$. As
before, we use ${\widetilde H}^{q}(G)$ to denote the subspace
$\{\tilde f: f\in H^{q}(G)\}$ in $L^{q}(\omega)$ (again,  $\omega$
is chosen so that $\|r\|_{H^{q}(G)} = \|r\|_{L^{q}(\omega)}$ for
each $r\in Rat(\overline G)$). Moreover, $f\rightarrow \tilde f$ is
an isometrical isomorphism.

In this paper, for seek of convinince, we often don't distinguish
between the two spaces. So, for any compact subset $K$ that contains
$\partial G$, when we say that $R^{q}(K,\omega) = H^{q}(G)$, we
really means that $R^{q}(K,\omega) = \widetilde H^{q}(G)$, provided
that $\omega$ is chosen so that the map $\tilde f \rightarrow f$ is
an isometry (note, in the case that $q=\infty$, any harmonic measure
$\omega$ will do the job). Keep this observation in mind.

\begin{lem} \label{l:conn}
Let $G$ be a bounded domain and let $\omega $ be a harmonic measure
for $G$. If $K$ is a compact subset that contains  $G$, then
 $\nabla R^{2}(K,\omega)$ is  a connected subset that contains $G$.
\end{lem}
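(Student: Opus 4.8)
The plan is to apply Theorem~\ref{t:0} to the measure $\omega$ and the compact set $K$, and then to argue that the decomposition it produces must be trivial. First I would note that since $G$ is a domain (connected), its complement has some number of components; we may enlarge $K$ if necessary so that the complement of $K$ has only finitely many components (for instance, filling in all but one of the bounded complementary components of $G$ changes neither $H^{2}(G)$ nor the relevant rational approximation, because $\omega$ is supported on $\partial G\subset K$). With that reduction, Theorem~\ref{t:0} gives a Borel partition $\{\Delta_{n}\}_{n\geq 0}$ of the support of $\omega$ with $R^{2}(K,\omega)=L^{2}(\omega|\Delta_{0})\oplus\bigoplus_{n\geq 1}R^{2}(K,\omega|\Delta_{n})$, and $\nabla R^{2}(K,\omega)=\bigcup_{n\geq 1}U_{n}$ where $U_{n}=\nabla R^{2}(K,\omega|\Delta_{n})$ are the (disjoint) abpe sets. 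Since $\omega$ is a harmonic measure for the domain $G$, the constant function $1$ and the coordinate function $z$ lie in $\mathrm{Rat}(K)$, and the whole point is that $\omega$ does not split: any set $E$ carrying a direct summand would give a nontrivial characteristic function $\chi_{E}$ in $R^{2}(K,\omega)$, but a harmonic measure of a domain admits no such splitting.

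The key steps, in order: (i) Show $\Delta_{0}$ is null. If $\omega(\Delta_{0})>0$ then $\chi_{\Delta_{0}}\in R^{2}(K,\omega)$; but $\chi_{\Delta_{0}}$ is a rational-function limit in $L^{2}(\omega)$, and evaluating via the reproducing property at a point $a\in G$ (where point evaluation on $\mathrm{Rat}(K)$ is bounded in the $H^{2}(G)=L^{2}(\omega)$ norm, by definition of the Hardy-space norm) forces the limiting function to be analytic on all of $G$; an analytic function on the connected set $G$ that equals a characteristic function a.e. $[\omega]$ must be constant $0$ or $1$ there, and by the mutual absolute continuity of harmonic measures it is then $0$ or $1$ a.e. $[\omega]$, contradicting $0<\omega(\Delta_{0})<\omega(\mathrm{supp}\,\omega)$ unless the partition is trivial. (ii) Show there is exactly one $n\geq 1$ with $\omega(\Delta_{n})>0$: the same argument shows each $\chi_{\Delta_{n}}\in R^{2}(K,\omega)$ is $0$ or $1$ a.e.\ $[\omega]$, so only one piece survives; call it $\Delta_{1}$, so $R^{2}(K,\omega)=R^{2}(K,\omega|\Delta_{1})$ and $\nabla R^{2}(K,\omega)=U_{1}$. (iii) Conclude $U_{1}$ is connected and contains $G$: by part 1) of Theorem~\ref{t:0}, $\overline{U}_{1}\supset\Delta_{1}=\mathrm{supp}\,\omega\supset\partial G$; and since every $r\in\mathrm{Rat}(K)\supset\mathrm{Rat}(\overline G)$ has bounded point evaluations on $G$ with $\hat r=r$ there, we get $G\subset\nabla R^{2}(K,\omega)=U_{1}$. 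Finally, $U_{1}$ is connected because by part 3) of Theorem~\ref{t:0} it is conformally equivalent to a circular domain, hence connected; alternatively, $\hat f$ being analytic on the abpe set together with $G$ dense-in-itself inside it and $G$ connected pins down connectedness directly. This gives $\nabla R^{2}(K,\omega)=U_{1}$, a connected set containing $G$.

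The main obstacle I anticipate is step (i)–(ii): rigorously ruling out a nontrivial splitting of a harmonic measure. The crux is the passage from "$\chi_{\Delta_{n}}$ is an $L^{2}(\omega)$-limit of rational functions" to "$\chi_{\Delta_{n}}$ agrees a.e.\ with an analytic function on $G$," which relies on the boundedness of point evaluations at points of $G$ for the $H^{2}(G)$-norm; this is built into the definition of $H^{2}(G)$ via the harmonic-majorant norm (the normal-families argument giving $|f(a)|\le C_a\|f\|_{H^2(G)}$), but one must be careful that $\omega$ is chosen so that $\|r\|_{L^2(\omega)}=\|r\|_{H^2(G)}$ on $\mathrm{Rat}(\overline G)$, which is exactly the normalization convention fixed in the paragraph preceding the lemma. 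Once analyticity on the connected set $G$ is in hand, the identity theorem and the boundary-uniqueness furnished by mutual absolute continuity of harmonic measures (here trivial since $G$ is a single domain) finish the dichotomy. Everything else is bookkeeping with Theorem~\ref{t:0}.
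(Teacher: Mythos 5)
Your argument is essentially the paper's: the paper likewise shows $R^{2}(K,\omega)$ contains no nontrivial characteristic function by transporting it, via the isometry $r\mapsto r$ from $R^{2}(K,\omega)$ into $H^{2}(G)$ (bounded point evaluations on the connected set $G$ coming from the harmonic-majorant norm, plus multiplicativity on bounded elements), to an analytic idempotent on $G$, which must be $0$ or $1$; irreducibility then yields connectedness of the abpe set through Theorem~\ref{t:0}, and $G\subset\nabla R^{2}(K,\omega)$ follows from Harnack, so your steps (i)--(iii) are the same argument organized through the decomposition rather than stated as irreducibility. One small caution: your opening reduction is shaky, since enlarging $K$ shrinks $\mathrm{Rat}(K)$ and so can in principle change $R^{2}(K,\omega)$ and its abpe set, but this is harmless in context because in every application of the lemma the complement of $K$ already has finitely many components.
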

\begin{proof}

Without loss of generality, we may assume that $\omega$ is chosen so
that $\|r\|_{L^{2}(\omega}) = \|r\|_{H^{2}(G)}\hspace{.02in}\mbox{
for each } \hspace{.021in}r\in Rat(K). $
 Since $G\subset K$, $Rat(K) \subset H^{2}(G)$. So we can
 define a linear operator
$   J: \hspace{.02in} R^{2}(K,\omega) \rightarrow
H^{2}(G)\hspace{.04in} \mbox{by } \hspace{.021in} J(r)=r
\hspace{.021in} \mbox{ for each }\hspace{.021in} r \in Rat(K). $
Then $ \|J(r)\| = \|r\|_{H^{2}(G)}
 = \|r\|_{L^{2}(\omega)}.
$ So $J$ is an isometry from $R^{2}(K,\omega)$ onto
$J[R^{2}(K,\omega)]$ and the later is a closed subspace of
$H^{2}(G)$. Moreover, it is easy to verify that $J(fg)= J(f)J(g)
\hspace{.02in}\mbox{ for } f,\hspace{.02in} g\in R^{2}(K,\omega)\cap
L^{\infty}(\omega). $ Now let $u\in R^{2}(K,\omega)\cap
L^{\infty}(\omega)$ be a non-zero characteristic function. Then $
[J(u)]^{2} = J(u^{2})=J(u)$. So we have that $J(u-1) = J(u)-1 = 0$,
and hence $u=1$. This means that   $R^{2}(K,\omega)$ is irreducible
and therefore $\nabla R^{2}(K,\omega)$ is connected.

Lastly, the definition of harmonic measure and the Harnack's
inequality implies that $G\subset \nabla R^{2}(K,\omega)$. So the
proof is complete.

\end{proof}

The following result extends Theorem  1 in  \cite{dens}.

\begin{thm} \label {p:dens}
Let $G=\cup G_{i}$ and let $\omega$ be a harmonic meausre for $G$.
Suppose that  each $G_{i}$ is nicely connected and the harmonic
measures of the components of $G$
 are mutually singular. Then
$P^{2}(\omega) = H^{2}(G)$ if and only if $\nabla P^{2}(\omega) =
G$.

\end{thm}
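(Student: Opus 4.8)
The plan is to match two direct‑sum decompositions of $P^2(\omega)$: the one coming from the components $G_i$, and the one furnished by Theorem~\ref{t:0}. For the set‑up, note that since each $G_i$ is nicely connected every $f\in H^2(G)$ has a boundary value function on $\partial G$, and since the $\omega_i$ are mutually singular we may identify $L^2(\omega)=\bigoplus_i L^2(\omega_i)$ and $\widetilde H^2(G)=\bigoplus_i\widetilde H^2(G_i)$, where each $\widetilde H^2(G_i)$, carried over by the a.e.\ injective boundary values of the Riemann map of $G_i$, is an isometric copy of $H^2(D)$ inside $L^2(\omega_i)$. In particular each $\widetilde H^2(G_i)$ is a \emph{proper} subspace of $L^2(\omega_i)$, contains no nonzero idempotent, and has $\widetilde H^\infty(G_i)$ norm dense in it. Since ${\cal P}\subseteq\widetilde H^\infty(G)\subseteq\widetilde H^2(G)$, one always has $P^2(\omega)\subseteq\widetilde H^2(G)$, so the equality $P^2(\omega)=H^2(G)$ in the statement is the equality $P^2(\omega)=\widetilde H^2(G)$.

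The next step is to apply Theorem~\ref{t:0} to $P^2(\omega)=R^2(K,\omega)$ with $K$ a closed disk containing $\overline G$, yielding a Borel partition $\{\Delta_n\}_{n\ge 0}$ of the support of $\omega$, the decomposition $P^2(\omega)=L^2(\omega|\Delta_0)\oplus\bigoplus_{n\ge 1}R^2(\overline{U}_n,\omega|\Delta_n)$ with $U_n=\nabla R^2(\overline{U}_n,\omega|\Delta_n)$, and the fact that $R^2(\overline{U}_n,\omega|\Delta_n)\cap L^\infty\cong H^\infty(U_n)$ has no nonzero idempotent. Two reductions follow. First, multiplication by $\chi_{\Delta_n}$ is the orthogonal projection of $P^2(\omega)$ onto its $n$-th summand, and because the map $e$ of Theorem~\ref{t:0}(2) is multiplicative on bounded elements, $\widehat{\chi_{\Delta_n}}$ is an idempotent analytic function on $\nabla P^2(\omega)$ which equals $1$ on $U_n$ and $0$ on every other $U_m$; being locally constant it follows that the $U_n$ are exactly the connected components of $\nabla P^2(\omega)$, hence pairwise disjoint. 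Second, $\Delta_0$ is $\omega$-null: otherwise $L^2(\omega|\Delta_0)\subseteq P^2(\omega)\subseteq\widetilde H^2(G)$ would put a whole $L^2(\omega_i)$, hence a nonzero idempotent, inside some $\widetilde H^2(G_i)$. So $P^2(\omega)=\bigoplus_{n\ge 1}R^2(\overline{U}_n,\omega|\Delta_n)$ and $\nabla P^2(\omega)=\bigsqcup_{n\ge 1}U_n$.

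For the forward implication, assume $P^2(\omega)=\bigoplus_i\widetilde H^2(G_i)$; I would then match the two decompositions. Applying $\chi_{\Delta_n}$ to the constant $1$ shows that its restriction to each summand $L^2(\omega_i)$ is an idempotent of $\widetilde H^2(G_i)$, hence $0$ or the identity; thus each $\Delta_n$ is, up to an $\omega$-null set, a union of the ``$\omega_i$-parts'' of the support of $\omega$, and it cannot comprise two or more of them, for then the idempotent cutting out one of them would be a nonzero proper idempotent of $R^2(\overline{U}_n,\omega|\Delta_n)\cap L^\infty\cong H^\infty(U_n)$. Hence there is a bijection $n\leftrightarrow i(n)$ with $R^2(\overline{U}_n,\omega|\Delta_n)=\widetilde H^2(G_{i(n)})$. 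By Theorem~\ref{t:0}(1) this summand is also $R^2(K,\omega|\Delta_n)$, so the Hardy-space point evaluations on $H^2(G_{i(n)})$ give $G_{i(n)}\subseteq\nabla R^2(K,\omega|\Delta_n)=U_n$, while Theorem~\ref{t:0}(3) confines $U_n$ to one component of $G$, necessarily $G_{i(n)}$; therefore $U_n=G_{i(n)}$ and $\nabla P^2(\omega)=\bigsqcup_n U_n=\bigcup_i G_i=G$.

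For the converse, assume $\nabla P^2(\omega)=G$. Each $U_n$ is connected and, by Theorem~\ref{t:0}(3), lies in one component of $G$; since the disjoint $U_n$ cover $G$ and each $G_i$ is connected, there is a bijection $i\leftrightarrow n(i)$ with $U_{n(i)}=G_i$. By Theorem~\ref{t:0}(1), $R^2(\overline{U}_{n(i)},\omega|\Delta_{n(i)})=R^2(\overline{G}_i,\omega|\Delta_{n(i)})$; since $\Delta_{n(i)}\subseteq\overline{G}_i\cap\partial G=\partial G_i$ and, by Theorem~\ref{t:0}(4), $\omega|\Delta_{n(i)}\ll\omega_i$, mutual singularity together with the fact that the $\Delta_n$ partition the support of $\omega$ forces $\Delta_{n(i)}=\partial G_i$ mod $\omega$-null with $\omega|\Delta_{n(i)}$ a positive multiple of $\omega_i$; so the summand equals $R^2(\overline{G}_i,\omega_i)$. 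Now Theorem~\ref{t:0}(2),(4),(5) and nice connectedness identify the inverse of $e$ on $H^\infty(G_i)$ as $f\mapsto\tilde f$, so $R^2(\overline{G}_i,\omega_i)\cap L^\infty=\widetilde H^\infty(G_i)$; as $R^2(\overline{G}_i,\omega_i)$ is norm closed and $\widetilde H^\infty(G_i)$ is norm dense in $\widetilde H^2(G_i)$, we get $R^2(\overline{G}_i,\omega_i)=\widetilde H^2(G_i)$, whence $P^2(\omega)=\bigoplus_i\widetilde H^2(G_i)=\widetilde H^2(G)=H^2(G)$. The step I expect to be the main obstacle is exactly this matching of the structure-theorem summands of $P^2(\omega)$ with the component Hardy spaces $\widetilde H^2(G_i)$: it is where all three hypotheses are needed simultaneously — mutual singularity to split $\omega$, the absence of nonzero idempotents in $\widetilde H^2(G_i)$ and in $H^\infty(U_n)$ to keep a summand from straddling two components or subdividing one, and parts (1), (3), (4) of Theorem~\ref{t:0} to pin down the carrier $\Delta_n$ and to confine $U_n$ to a single component; the nullity of $\Delta_0$, the density step, and the identification $U_n=G_{i(n)}$ are then routine.
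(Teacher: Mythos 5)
Your overall strategy --- decompose $P^2(\omega)$ once by the components $G_i$ (using mutual singularity and nice connectedness) and once by Theorem~\ref{t:0}, and then match the summands --- is sound, and your sufficiency direction ($\nabla P^2(\omega)=G\Rightarrow P^2(\omega)=H^2(G)$) is essentially a self-contained version of the paper's argument (the paper instead quotes Theorem 1 of \cite{dens} for the single-component identification and uses \ref{t:0}(5) to split off the $\chi_{\partial G_i}$). In that direction your appeal to Theorem~\ref{t:0}(3) is harmless, because the containment of each $U_n$ in a single component of $G$ already follows from the hypothesis $\bigsqcup_n U_n=\nabla P^2(\omega)=G$ together with connectedness of $U_n$.

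The genuine gap is in the necessity direction, at the step ``Theorem~\ref{t:0}(3) confines $U_n$ to one component of $G$, necessarily $G_{i(n)}$.'' Theorem~\ref{t:0} has no $G$ in its hypotheses (the reference to ``the component of $G$'' in (3) is a slip in the paper; it can only mean a component of the interior of $K$, here a disk), and the assertion that $U_n\subseteq G$ is simply false in general: the paper's own Remark after Theorem~\ref{t:main} exhibits a union of four disks, satisfying the hypotheses of this very theorem, for which $\nabla P^2(\omega)$ contains an open set $W$ disjoint from $G$. So you cannot get $U_n\subseteq G_{i(n)}$ for free; it must be extracted from the assumption you are using, namely that the summand $R^2(\overline U_n,\omega|\Delta_n)$ equals $\widetilde H^2(G_{i(n)})$. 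One way to close the gap: if $\lambda_0\in U_n\setminus G_{i(n)}$, then $f\mapsto\hat f(\lambda_0)$ is an $L^2$-bounded, hence weak-star continuous, multiplicative functional on $R^2(\overline U_n,\omega|\Delta_n)\cap L^\infty=\widetilde H^\infty(G_{i(n)})\cong H^\infty(D)$; every weak-star continuous multiplicative functional on $H^\infty(D)$ is evaluation at a point of $D$ (the functions $\bigl(\tfrac{1+\bar w_0 z}{2}\bigr)^k$ rule out $|w_0|=1$), so $\hat f(\lambda_0)=f(\zeta_0)$ for some $\zeta_0\in G_{i(n)}$ and all $f$; applying this to $f=z$ gives $\lambda_0=\zeta_0\in G_{i(n)}$, a contradiction. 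With that inserted, your matching $U_n=G_{i(n)}$ and hence $\nabla P^2(\omega)=G$ goes through. (Two cosmetic points: you should say $\widetilde H^2(G_i)$ contains no \emph{nontrivial} idempotent --- the constant $1$ is a nonzero idempotent it certainly contains --- and in the $\Delta_0$-nullity argument the projected idempotent need not be a whole $\chi_{E_i}$, but either alternative yields the same contradiction.)
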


\begin{proof}
 Sufficiency.

Suppose $\nabla P^{2}(\omega) = G$. By Lemma~\ref{l:conn}, we have
that
 $G_{i} \subset \nabla P^{2}(\omega_{i})$. This together with our hypothesis
implies that   $G_{i} = \nabla P^{2}(\omega_{i})$. So it follows by
Theorem 1 in \cite{dens} that $P^{2}(\omega|\partial G_{i} )=
H^{2}(G_{i})$. According to 5) of  Theorem~\ref{t:0}, we have that
$\chi_{\partial G_{i}} $, the characteristic function of $G_{i}$, is
in $P^{2}(\omega)$ for each $i\geq 1$. Thus, we conclude that
\begin{align*}
P^{2}(\omega) & =P^{2}(\omega|\partial G_{i})\oplus ... \oplus
P^{2}(\omega|\partial G_{i}) \oplus ... \\& =H^{2}(G_{1}) \oplus ...
\oplus H^{2}(G_{i})\oplus ... \\& =H^{2}(G).
\end{align*}

Necessity.

The proof is the same as that in the proof of Theorem 1 in
\cite{dens}.

\end{proof}

Recall that a function $g$ in $H^{\infty}(D)$ is called inner if its
boundary values has  modulus 1 almost everywhere on $\partial D$. An
analytic function on $D$ is called  outer if it is of the form
\[f(z)= \alpha \exp\{\frac{1}{2\pi} \int^{\pi}_{-\pi} \frac{
e^{it}+z}{e^{it}-z} \gamma(t) d t\},
\]
where $\alpha$ is a constant with modulus 1 and $\gamma$ is a
real-valued function in $L^{1}(m)$.

The classical Hardy space theory guarantees that every function in
$H^{2}(D)$ can be expressed as a product of an inner and outer
functions. The expression is unique up to a constant with modulus 1
\cite{hoff}.

Let $\psi$ is the Riemann map from $G$ onto $D$.  We call $g\in
H^{2}(G)$ an outer function if $g\circ \psi^{-1}$ is outer in
$H^{2}(D)$. The Beurling's theorem implies that $\phi H^{2}(D)$ is
dense in $H^{2}(D)$ if $\phi$ is outer in $H^{2}(D)$.

The following theorem together with Theorem~\ref{t:a} also gives a
characterization for perfectly connected domain.

\begin{thm} \label {t:out}
Let $G=\cup G_{i} $ and let $\omega$ be a harmonic meausre for $G$.
 Suppose that  each $G_{i}$ is nicely connected and
the harmonic measures of the components of $G$
 are mutually singular. Then
$P^{\infty}(\omega) = H^{\infty}(G)$ if and only if  \hspace{.015in}
$\cal P g$ is dense in $H^{2}(G)$ for each $g$ in $H^{2}(G)$ for
which $g\chi_{G_{i}}$ is outer in $H^{2}(G_{i})$, $i=1, 2, ...$
\end{thm}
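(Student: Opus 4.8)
The plan is to prove the equivalence by reducing, via the mutual singularity of the harmonic measures and Theorem~\ref{t:0}, to the simply connected case and then exploiting the inner–outer factorization on each component together with Theorem~\ref{t:a}. First I would record the decomposition: since the $\omega_i$ are mutually singular, $L^2(\omega)=\bigoplus_i L^2(\omega|\partial G_i)$ and $H^2(G)=\bigoplus_i H^2(G_i)$, and likewise $P^\infty(\omega)\subset\prod_i P^\infty(\omega_i)$; moreover the characteristic functions $\chi_{\partial G_i}$ play the role of identifying each summand. A function $g\in H^2(G)$ with each $g\chi_{G_i}$ outer in $H^2(G_i)$ is exactly a function that is ``outer on every component,'' and by the Beurling-type density statement quoted just before the theorem (applied componentwise through the Riemann maps $\psi_i$), $\mathcal P g$ dense in $H^2(G)$ is equivalent to being able to approximate, simultaneously in all components, the constant $1$ — or more precisely the generators $\psi_i$ — by polynomials times $g$.

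For the forward direction, assume $P^\infty(\omega)=H^\infty(G)$. By Theorem~\ref{t:a} each $G_i$ is perfectly connected, so the Riemann map $\psi_i$ lies in the weak-star closure of $\mathcal P$ in $H^\infty(G_i)$, and in fact (as in the proof of that theorem) in the $P^2(\omega_i)$-closure of $\mathcal P$. Given $g$ outer on each component, $g\circ\psi_i^{-1}$ is outer in $H^2(D)$, so $\mathcal P\,(g\circ\psi_i^{-1})$ is dense in $H^2(D)$; transporting back, $\{p g\chi_{G_i}: p\in\mathcal P\}$ is dense in $H^2(G_i)$ for each $i$. The point is then to upgrade these componentwise densities to a global density in $H^2(G)=\bigoplus H^2(G_i)$: here one uses that $\chi_{\partial G_i}\in P^2(\omega)$ (which follows from $P^\infty(\omega)=H^\infty(G)$ and Theorem~\ref{t:0}(5), exactly as in the proof of Theorem~\ref{p:dens}), so one can first cut down to a single component by multiplying by $\chi_{\partial G_i}$ — approximable by polynomials — and then approximate within that component. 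A diagonal/net argument over finitely many components at a time, controlled by the $2^{-i}$-weighting of the norm, finishes it.

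For the converse, assume $\mathcal P g$ is dense in $H^2(G)$ for every such $g$. The natural choice is to take $g$ with $g\chi_{G_i}$ equal (via $\psi_i$) to the outer function whose modulus boundary data make $g$ a genuine $H^2$ function on all of $G$ simultaneously — e.g.\ an outer function on each $G_i$ engineered so that $g$ itself, and also $\psi_i g$, lie in $H^2(G)$; then density of $\mathcal P g$ gives polynomials $p_n$ with $p_n g\to \psi_i g$-type targets, and dividing out the outer factor (which is bounded below off a null set in the relevant local sense, or rather: multiplication by an outer function is injective with dense range) yields that each $\psi_i$ is a $P^2(\omega_i)$-limit of polynomials. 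By the Hilbert-space coincidence of weak and weak-star topologies this puts $\psi_i$ in $P^\infty(\omega_i)$, so by Theorem~\ref{t:a} each $G_i$ is perfectly connected and $P^\infty(\omega_i)=H^\infty(G_i)$; combined with the mutual singularity (which splits $P^\infty(\omega)$ along the components and makes each $\chi_{\partial G_i}$ available) this gives $P^\infty(\omega)=\bigoplus_i P^\infty(\omega_i)=\bigoplus_i H^\infty(G_i)=H^\infty(G)$.

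The main obstacle I anticipate is the bookkeeping that passes between the three topologies involved — norm in $H^2(G)$, weak-star in $H^\infty(G)$, and the $L^2(\omega)$/$L^\infty(\omega)$ pictures — while handling infinitely many components at once. Concretely, the delicate points are: (i) justifying $\chi_{\partial G_i}\in P^2(\omega)$ (respectively its polynomial approximability) from the stated hypotheses, so that one may localize to a component; (ii) the claim that multiplication by a component-wise outer function has dense range in $H^2(G)$, which is Beurling's theorem transported through $\psi_i$ but must be assembled over all $i$; and (iii) choosing, in the converse, a single $g\in H^2(G)$ that is outer on every $G_i$ and still keeps $\psi_i g\in H^2(G)$, so that the hypothesis is actually applicable and division by $g$ recovers the $\psi_i$. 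Everything else is a routine assembly using Theorem~\ref{t:0}, Theorem~\ref{t:a}, and the classical inner–outer factorization.
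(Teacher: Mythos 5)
Your direction ``$P^{\infty}(\omega)=H^{\infty}(G)$ implies density of $\mathcal{P}g$'' is essentially sound and close to the paper's own argument (the paper approximates a given $f$ by $gh$ with $h\in H^{\infty}(G_{i})$ componentwise using outerness, sums finitely many pieces under the $2^{-i}$ weighting, and then replaces $H^{\infty}(G)=P^{\infty}(\omega)$ by polynomials; your route through the disk and Beurling's theorem accomplishes the same). The problem is the converse, where your argument has two genuine gaps, the second of which is fatal.

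First, from $p_{n}g\to\psi_{i}g$ in $L^{2}(\omega)$ you can only conclude that $\psi_{i}$ lies in $P^{2}(|g|^{2}\omega)$ (or, taking $g=1$, that $\psi_{i}\in P^{2}(\omega)$); the jump from ``$\psi_{i}$ is an $L^{2}$-limit of polynomials'' to ``$\psi_{i}\in P^{\infty}(\omega_{i})$'' goes the wrong way. The coincidence of weak and weak-star topologies on a Hilbert space shows that weak-star approximability implies norm approximability (this is how it is used in the proof of Theorem~\ref{t:a}), not the reverse: a bounded function in $P^{2}(\mu)$ need not be a weak-star limit of polynomials, i.e.\ $P^{2}(\mu)\cap L^{\infty}(\mu)\neq P^{\infty}(\mu)$ in general; the two agree only for special measures $\eta$ in the equivalence class of $\mu$ (the Thomson--McCarthy result the paper quotes). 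Second, your final assembly step --- mutual singularity plus $P^{\infty}(\omega_{i})=H^{\infty}(G_{i})$ for every $i$ yields $P^{\infty}(\omega)=H^{\infty}(G)$ --- is false, and the paper's own remark after Theorem~\ref{t:main} is a counterexample: four open disks with mutually singular harmonic measures, each trivially perfectly connected, yet $P^{\infty}(\omega)\neq H^{\infty}(G)$ because $\nabla P^{2}(\omega)$ swallows the bounded complementary component. Mutual singularity alone does not put $\chi_{\partial G_{i}}$ into $P^{\infty}(\omega)$. What the paper actually does with the density hypothesis is different in kind: it applies it not to one convenient $g$ but to a Szeg\H{o} outer function $x$ with $|x|^{2}=d\nu/d\omega+1$ for an \emph{arbitrary} finite measure $\nu$ with $[\nu]=[\omega]$, thereby proving $\nabla P^{2}(\nu)\subset G$ for every such $\nu$; only then, via $P^{\infty}(\omega)=P^{2}(\eta)\cap L^{\infty}(\eta)=H^{\infty}(\nabla P^{2}(\eta))$ for a suitable $\eta$ equivalent to $\omega$, does it conclude $P^{\infty}(\omega)=H^{\infty}(G)$. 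That control over all measures equivalent to $\omega$ is the missing idea in your proposal.
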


\begin{proof}
Sufficiency.

Suppose that $\cal P g$ is dense in $H^{2}(G)$ for each $g$ in
$H^{2}(G)$ for which $g\chi_{G_{i}}$ is outer in $H^{2}(G_{i})$. We
claim that
\[ \nabla P^{2}(\nu)\subset G\hspace{.1in}\mbox{if
$\nu$ is a finite measure such that}\hspace{.1in} [\nu]=[\omega].
\]
Note, if we take g=1, then we see that $\cal P$ is dense in
$H^{2}(G)$.  So it follows by  Proposition~\ref{p:dens} that $\nabla
P^{2}(\omega) = G$.

Set $y=\frac{d\nu}{d\omega} +1$. Then both $y$ and $\log y$ belong
to $ L^{1}(\omega)$. It follows by Szeg\"{o}'s theorem that there
exist an outer function $x_{i}$ in $H^{2}(G_{i})$ such that
$|x_{i}|^{2}=y\chi_{G_{i}}$  for each $i\geq 1$. Clearly,  for each
positive integer $k$, $|\sum_{i=1}^{k} x_{i}|^{2}= \sum_{i=1}^{k}
|x_{i}|^{2} \leq |y|. $
 It follows by the dominated theorem that
$\lim_{k\rightarrow \infty} \sum_{i=1}^{k} x_{i}$ in $H^{2}(G)$
exists.
 Set $x=\sum_{i=1}^{\infty} x_{i}$.
Then $x$ belongs to  $H^{2}(G)$ and  $|x|^{2} = y $. Define a linear
operator $A$ from $P^{2}(y\omega)$ to $P^{2}(\omega)$ densely by
$A(p) = xp\hspace{.1in}\mbox{ for each} \hspace{.1in} p\in \cal P. $
Then
\[ \int |A(p)|^{2} d \omega = \int |xp|^{2} d\omega
= \int|p|^{2} d (y\omega).
\]
Thus, $A$ is an isometry. By our hypothesis, $A$ has a dense range.
So it follows that $A$ is an unitary operator. For any $f\in
P^{2}(y\omega)$, there is $\{p_{n}\}\subset \cal P$ such that $\lim
p_{n}= f$ in $P^{2}(y\omega)$. So
\[\lim A(p) = \lim p_{n} x = fx = A(f)\hspace{.1in}\mbox{ in}\hspace{.1in}
 P^{2}(\omega)
\]
and hence $\widehat{fx}$ is analytic on $G$. Since $\hat{x} \neq  0$
on $G$, we see that $f$ extends to be analyitc on $G$. This implies
that $G \subset \nabla P^{2}(y\omega)$. Similarly, we can show that
$\nabla P^{2}(y\omega) \subset G$. Thus, we conclude that
\[ \nabla P^{2}(\nu) \subset \nabla P^{2}(y\omega)
= \nabla P^{2}(\omega)=G.
\]
This proves  the claim. It is a well-known result (see \cite{jm,
mc}) that there is a positive finite measure $\eta$ such that
$[\eta]=[\omega]$ and $P^{\infty}(\omega)=P^{2}(\eta)\cap
L^{\infty}(\eta)$. So it follows that
\begin{align*}
H^{2}(G) &\supset P^{2}(\omega) \\& \supset P^{\infty}(\omega) \\& =
P^{2}(\eta)\cap L^{\infty}(\eta)\\& =H^{\infty}(\nabla P^{2}(\eta)).
\end{align*}
Consequently, we have  that $\nabla P^{2}(\eta) = G$. Hence, it
follows by the inequality above that
$P^{\infty}(\omega)= H^{\infty}(G)$. So we are done.

Necessity.

Suppose $P^{\infty}(\omega) = H^{\infty}(G)$. Let $g$ be a function
in $H^{2}(G)$ such that $g\chi_{G_{i}}$ is outer in $H^{2}(G_{i})$
for each $i\geq 1$.
 Pick an arbitrary functin  $f\in H^{2}(G)$. For given $\epsilon > 0$,
and each  $i\geq 1$, since $g_{i}$ is outer,  there is a function
$h_{i}$ in $H^{\infty}(G_{i})$ such that
\[\|f\chi_{\partial G_{i}}- g\chi_{\partial G_{i}}  h_{i}\| <
\frac{1}{2}(\frac{\epsilon}{2^{i}}).
 \]
Thus, when $k$ is sufficiently large, we have
\begin{align*}
\|f-g\sum_{i=1}^{k}h_{i}\|&\leq\|\sum_{i=1}^{k}(f\chi_{G_{i}}-g\chi_{G_{i}}
h_{i})\| + \|\sum_{i=k+1}^{\infty} f\chi_{G_{s}}\| \\& \leq
\sum_{i=1}^{k} \frac{1}{2}
(\frac{\epsilon}{2^{i}})+\frac{\epsilon}{2}= \epsilon .
\end{align*}
Set $h=\sum _{i=1}^{k} h_{i}$, then $h\in H^{\infty}(G)$ and
$\|f-gh\|< \epsilon$. Hence, $H^{\infty}(G)g$ is dense in
$H^{2}(G)$, Finally, since
\[H^{\infty}(G) =P^{\infty}(\omega)\subset P^{2}(\omega),
\]
it follows clearly that $\cal P g$ is dense in $H^{2}(G)$. `
\end{proof}

\section{The main result}

For a bounded domain $G$, a point $a$ in $\partial G$ is said to be
removable for $H^{2}(G)$ if every function in $H^{2}(G)$ extends
analyticly to a
 neighborhood of $a$.  Every isolated point in $\partial G$ is removable
for $H^{2}(G)$ (see \cite{axler}).

For a bounded linear operator $T$, let $Lat (T)  $  denote the
lattice of invariant subspaces of $T$.

Let $X$ be a linear topological space and let $F$ be a subset of
$X$. We use $\vee\{F \}$ to denote the closure of the  linear span
of $F$ in $X$.

\begin{lem} \label{l:main}
Let $G$ be a bounded domain that has no removable boundary points
for $H^{2}(G)$.  Let $M_{z}$ denote the shift on $H^{2}(G)$. In
order for that every $M \in Lat( M_{z})$ there exist a function
$u\in H^{\infty}(G)$ such that $M=\vee \{uH^{2}(G)\}$ it is
necessary and sufficient that $G$ is a perfect connected domain.
\end{lem}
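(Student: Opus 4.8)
The plan is to prove the two directions separately, using the characterization of perfectly connected domains from Theorem~\ref{t:a} together with the structure theorem Theorem~\ref{t:0} (in its $P^2(\mu)$ form) and the density results of Section~2.

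\emph{Sufficiency.} Suppose $G$ is perfectly connected, let $\psi$ be the Riemann map from $G$ onto $D$, and fix $M\in Lat(M_z)$, $M\neq\{0\}$. First I would transport everything to the disk: since $G$ is perfectly connected, Theorem~\ref{t:a} gives that $\mathcal P$ is weak-star dense in $H^\infty(G)$, so $P^\infty(\omega)=H^\infty(G)$ and, by Theorem~\ref{p:dens}, $P^2(\omega)=H^2(G)$ with $\nabla P^2(\omega)=G$. Consequently, multiplication by $\psi$ on $H^2(G)$ is unitarily equivalent, via the boundary-value identification and composition with $\psi^{-1}$, to multiplication by $z$ on $H^2(D)$ (here one uses that $\omega=m\circ\psi$ and $\widetilde H^2(G)\subset L^2(\omega)$ is carried isometrically onto $H^2(D)\subset L^2(m)$). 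The key point is that $M$, being invariant under $M_z$ and hence (since $z$ is a weak-star generator and $M$ is closed) invariant under all of $H^\infty(G)$, is in particular invariant under $M_\psi$. Pulling $M$ back to the disk we obtain a $z$-invariant (indeed $H^\infty(D)$-invariant) closed subspace $N$ of $H^2(D)$. By the classical Beurling theorem, $N=vH^2(D)$ for an inner function $v$ (or $N=\{0\}$). Pushing forward, set $u = v\circ\psi \in H^\infty(G)$; then $M=\vee\{uH^2(G)\}$, because the unitary intertwines $\vee\{uH^2(G)\}$ with $\overline{vH^2(D)}=vH^2(D)=N$. This gives the desired representation, and moreover $u\circ\psi^{-1}=v$ is inner, which matches the concluding assertion of the paper's main theorem for the one-component case.

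\emph{Necessity.} Conversely, assume every $M\in Lat(M_z)$ has the form $\vee\{uH^2(G)\}$ with $u\in H^\infty(G)$; I want to conclude $G$ is perfectly connected, equivalently (Theorem~\ref{t:a}) that $\mathcal P$ is weak-star dense in $H^\infty(G)$, equivalently (Theorem~\ref{p:dens}) that $\nabla P^2(\omega)=G$ where $\omega$ is a harmonic measure. The natural candidate subspace to test is $M=\vee\{\mathcal P\}=P^2(\omega)$ (viewed inside $H^2(G)$ via the boundary identification, legitimate because $G$ has no removable points, so $G$ is nicely connected enough for the identification $H^2(G)\cong\widetilde H^2(G)$); this is clearly $M_z$-invariant. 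By hypothesis $P^2(\omega)=\vee\{uH^2(G)\}$ for some $u\in H^\infty(G)$. Since $1\in P^2(\omega)$, one shows $u$ must be (a unimodular constant times) a bounded analytic function that is bounded away from $0$ — more precisely, the closure of $uH^2(G)$ contains $1$, and $u\in P^2(\omega)$ so $u$ extends analytically to $\nabla P^2(\omega)$; the absence of removable boundary points forces $\nabla P^2(\omega)=G$ (otherwise $H^2(\nabla P^2(\omega))\supsetneq H^2(G)$ would produce analytic continuation across $\partial G$, contradicting no-removable-points). Hence $P^2(\omega)=H^2(G)$, so $\nabla P^2(\omega)=G$, and Theorem~\ref{p:dens} together with Theorem~\ref{t:a} yields that $G$ is perfectly connected.

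\emph{Main obstacle.} The delicate step is the necessity direction, specifically extracting geometric information ($\nabla P^2(\omega)=G$) from the bare assumption that $P^2(\omega)$ is \emph{singly generated} as an $M_z$-invariant subspace. The subtlety is ruling out that $u$ is a genuine inner function with zeros (which would make $uH^2(G)\neq H^2(G)$): one must argue, using that $1\in\vee\{uH^2(G)\}$ and that $\hat u$ is analytic and nonvanishing wherever the bounded point evaluations live, that $u$ has no zeros in $\nabla P^2(\omega)$ and is therefore invertible in $P^\infty$, forcing the generated subspace to be everything. The no-removable-boundary-points hypothesis is exactly what is needed to prevent $\nabla P^2(\omega)$ from being strictly larger than $G$, and handling this cleanly is where the real work lies; the sufficiency direction, by contrast, is a fairly direct transplantation of Beurling's theorem once the weak-star generator property of $\psi$ has been invoked.
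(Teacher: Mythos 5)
Your sufficiency argument is, in substance, the paper's: the whole point is that a norm-closed $z$-invariant subspace is automatically invariant under multiplication by $\psi$ because $\psi$ lies in the weak-star closure of the polynomials (the paper runs this through a Hahn--Banach argument after transporting to the disk), and then Beurling's theorem finishes. That direction is fine, modulo the fact that the boundary-value identification you use presupposes that $G$ is nicely connected, which in this direction is supplied by Theorem~\ref{t:a}.

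The necessity direction has a genuine gap. You test only the single invariant subspace $M=\vee\{\mathcal P\}$, and even if that analysis were airtight it would yield at best $P^{2}(\omega)=H^{2}(G)$, an $L^{2}$-density statement. But ``perfectly connected'' is the weak-star statement $P^{\infty}(\omega)=H^{\infty}(G)$, and by Theorem~\ref{t:out} this is equivalent to $\mathcal P g$ being dense in $H^{2}(G)$ for \emph{every} outer $g$, not merely for $g=1$. Your chain ``$P^{2}(\omega)=H^{2}(G)\Rightarrow\nabla P^{2}(\omega)=G\Rightarrow\mathcal P$ weak-star dense in $H^{\infty}(G)$'' silently upgrades an $L^{2}$ conclusion to an $L^{\infty}$ one: Theorem~\ref{p:dens} gives the first equivalence, but nothing gives the second, and the paper's entire Theorem~\ref{t:out} exists precisely because these conditions are not interchangeable. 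The paper closes this gap by testing, for an arbitrary outer $g$, the subspace $\vee\{p(g\circ\psi):p\in\mathcal P\}$; if it were proper, writing its generator as $u=vh$ (inner times outer) and applying Beurling's theorem on $D$ forces $v\circ\psi^{-1}$ to be constant, so the subspace equals $H^{2}(G)$, a contradiction. Two further omissions: you never show $G$ is simply connected (the paper does this first, via $R^{2}(K,\omega)$, Lemma~\ref{l:conn}, Theorem~\ref{t:0}, and a function with an essential singularity in a bounded complementary component --- this is where the no-removable-points hypothesis actually enters), nor that $G$ is nicely connected (extracted in the paper from $\psi\in N$ and a.e.\ convergence of a subsequence of rational functions); both are prerequisites for Theorem~\ref{t:a}, for Theorem~\ref{p:dens}, and for the identification $H^{2}(G)\cong\widetilde H^{2}(G)$ you rely on. Finally, the claim that absence of removable boundary points forces $\nabla P^{2}(\omega)=G$ is not correct as stated: $\nabla P^{2}(\omega)\supsetneq G$ only means that functions in $P^{2}(\omega)$ extend across $\partial G$, which is entirely consistent with $P^{2}(\omega)\subsetneq H^{2}(G)$ and contradicts nothing by itself.
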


\begin{proof}
Necessity.

Let $K$ be the union of $\overline G$ with the bounded components of
the complement of $\overline G$ in the plane and  let $\omega$
denote a harmonic measure of $G$.
 Define $I$: $R^{2}(K,\omega) \rightarrow H^{2}(G)$
by $I(r)= r $, for each $r\in Rat(K)$. Then $I$ is continuous and
bounded below. Then $N=RI(^{2}(K,\omega))$ is the closed subspace
generated by $Rat(K) $ in $H^{2}(G)$.

We first show that $G$ must be simply connected. Suppose that $G$ is
not simply connected, then the complement of $G$ contains a
component $E$ that is a compact subset.
Let $W$ denote $\nabla R^{2}(K,\omega)$. Since the  components of
$K$ are simply connected, it follows by Lemma~\ref{l:conn} and
Theorem~\ref{t:0} that $W$ is a simply connected domain that
contains $G$. Moreover, $W$ contains $E$. Since   isolated points in
$\partial G$ are removable for $H^{2}(G)$, we see that  $E$ is a
continum. So it follows that there exists a function, say $h$, in
$H^{2}(G)$ that has essential singularity points contained in $E$.

On the other hand, for each $f\in N$, there is a sequence $\{f_{n}\}
$ in $Rat(K)$ such that $\lim f_{n} = f$ in $H^{2}(G)$. So this
implies that $f_{n} \rightarrow I^{-1}(f)$ and hence $\{f_{n} \}$
converges to $I^{-1}(f)$ uniformly on the compact subsets of $W$.
Consequently, we see that $f $ extends to $W$ analyticly. This
contradicts to the fact that $h$ has essential singularities on $E$.
Hence, $G$ is simply connected.

   By our hypothesis, there is a function $u\in H^{\infty}(G)$ such that
$N= \vee \{u H^{2}(G)\}$.  So we see that  $u  \in N$. Now,  let
$u=vh$, where $\psi$ is the Riemann map from $G$ onto $D$,
$v\circ\psi^{-1}$ is inner and  $h\circ\psi^{-1}$ is outer in
$H^{2}(D)$. Then  we have
\[N=v \vee \{hf: f\in H^{2}(G)\}.
\]
Since $h$ is outer, $ 1= lim_{n\rightarrow \infty} v\alpha_{n}$ for
some sequence $\{\alpha_{n}\} \in H^{2}(G)$.
 Thus,  $\psi =lim_{n\rightarrow \infty}v\alpha_{n}\psi\in N$.  Therefore, there is a sequence of
$\{r_{n}\}\in R(K)$ such that $\lim r_{n} = \psi$ in $H^{2}(G)$. But
this implies that there is a subsequence $\{r_{n_{j}}\}$ such that
 $\lim r_{n_{j}}\circ\psi^{-1}= z$ pointwise almost everywhere
on $\partial D$. Hence $\psi^{-1} $ is 1-1 a.e. $[m]]$ on $\partial
D$ and thus it follows by the definition that $G$ is a nicely
connected domain.

Now, if $G$ was not perfectly connected, then it would follows from
Theorem~\ref{t:out}
 that there is an outer function  $g\in H^{2}(D)$
such that $\{p(g\circ\psi): p\in \cal P\}$ is not dense in
$H^{2}(G)$. Set $M= \vee \{p (g\circ\psi): p\in  \cal P\}$.
 Then $M\in Lat( M_{z})$ and $M$ is nontrivial.
Again, by our hypothesis, there is a function  $u\in H^{2}(G)$ such
that $M= \vee\{u H^{2}(G) \}$. Let $u=vh$, where $v\circ\psi^{-1}$
is inner and $h\circ\psi^{-1}$ is outer in $H^{2}(D)$. Then there
exists a sequence $\{f_{n}\}$ in $ H^{2}(G)$ such that $g\circ\psi=
\lim_{n} v h  f_{n}\hspace{.02in} \mbox{ in} \hspace{.02in}
H^{2}(G).  $ This implies that $g \in v\circ \psi^{-1} H^{2}(D)$.
 Applying Beuring's theorem, we conclude that $v\circ\psi^{-1}$
 must be a constant.
 Consequently, we get that
$M = \vee \{ g H^{2}(G)\} = H^{2}(G)$, contradicting the fact that
$M$ is nontrivial.  Therefore, $G$ is a perfectly connected domain.

Sufficiency.

Let $M$ be a closed subspace of $H^{2}(G)$ such that $z M\subset M$.
 Then
$\psi^{-1} F \subset F,\hspace{.02inin}\mbox{ where }\hspace{.02in}
F=\{f\circ\psi^{-1}: f\in M\}. $
 We claim that $ z F\subset F$.
 For this fix a function $h\in F$. Let $g \in F^{\perp}$. Then
$\int h \overline g dm = 0$ and thus
\[\int p\circ\psi^{-1}h \overline g dm =0\hspace{.1in}\mbox{ for each }
\hspace{.1in}p\in \cal P.
\]
 Since $G$ is perfectly connected,
 $\psi$ is in the weak-star closure of $\cal P$. So it follows that
 $z$ belongs to the weak-star closure of $\{p\circ \psi^{-1}: p\in \cal P \}$.
This  implies that $\int zh \overline g dm = 0$. By the Hahn-Banach
theorem, we conclude that $zh \in F$ for
 each $h\in F$.
 Now applying the Beurling's theorem, we have that $F= uH^{2}(D)$
 for an inner function $u$ and therefore, we conclude that
 $M= u\circ\psi H^{2}(G)$.

\end{proof}

The following is our main theorem.

\begin{thm} \label{t:main}
Let $G$ be a bounded open subset such that no point in the
boundaries of the components of $G$ is removable for $H^{2}(G)$.  In
order that for each $M\in Lat( M_{z})$  there exist $u\in
H^{\infty}(G)$ such that $ M = \vee\{u H^{2}(G)\}$  it is necessary
and sufficient that the following hold:

1) Each component of $G$ is a perfectly connected domain.

2)  The harmonic measures of the components of $G$ are mutually
singular.

3) 
  The set of polynomials is weak-star dense in $ H^{\infty}(G)$.

\noindent Moreover, if $G$ satisfies these conditions,
 then  every $M\in Lat( M_{z})$ is of the form $u H^{2}(G)$,
where 
the restriction of $u$ to each of the components of $G$
  is either an inner function or zero.
\end{thm}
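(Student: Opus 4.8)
My plan is to reduce the general open-set statement to the single-domain Lemma~\ref{l:main} by exploiting the mutual singularity of the harmonic measures to split $H^2(G)$ as an orthogonal direct sum over the components, and then reassemble the invariant subspaces componentwise. First I would fix a harmonic measure $\omega=\sum 2^{-i}\omega_i$ for $G=\cup G_i$. Under hypothesis 2), the measures $\omega_i$ are mutually singular, so the characteristic functions $\chi_{G_i}$ act as orthogonal projections and $H^2(G)=\bigoplus_i H^2(G_i)$ as Hilbert spaces, with the shift $M_z$ respecting this decomposition. The key preliminary observation is that hypothesis 3), via Theorem~\ref{t:out} and Proposition~\ref{p:dens}, forces $\chi_{G_i}\in P^2(\omega)$, hence each $\chi_{G_i}$ lies in (the closure of) $\mathcal P$, so every $M\in Lat(M_z)$ automatically satisfies $\chi_{G_i}M\subset M$. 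Therefore $M=\bigoplus_i M_i$ with $M_i:=\chi_{G_i}M\in Lat(M_z|_{H^2(G_i)})$.

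Next I would apply the single-domain result. Since hypothesis 1) says each $G_i$ is perfectly connected, Lemma~\ref{l:main} applies to each $G_i$ (note that by hypothesis $G$ has no removable boundary points, so neither does any $G_i$), giving $M_i=\vee\{u_i H^2(G_i)\}$ for some $u_i\in H^\infty(G_i)$. Invoking the Beurling-type conclusion inside the proof of Lemma~\ref{l:main}, one may take $u_i$ to be either inner in $H^2(G_i)$ (i.e.\ $u_i\circ\psi_i^{-1}$ inner, $\psi_i$ the Riemann map) or zero, and in fact $M_i=u_i H^2(G_i)$ outright, not merely its closed span. Setting $u=\sum_i u_i\chi_{G_i}$ gives $u\in H^\infty(G)$ (each $u_i$ is bounded by $1$ or zero) with $u|_{G_i}$ inner or zero, and $M=\bigoplus_i u_i H^2(G_i)=uH^2(G)$; the last equality uses that $uH^2(G)$ is already closed, which follows because multiplication by each inner $u_i$ is an isometry on $H^2(G_i)$ and the block structure makes multiplication by $u$ a partial isometry on $H^2(G)$ with closed range. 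That establishes sufficiency together with the ``moreover'' clause.

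For necessity I would argue contrapositively that failure of any one of 1)--3) produces an $M\in Lat(M_z)$ not of the form $\vee\{uH^2(G)\}$. If some $G_i$ fails to be perfectly connected, Lemma~\ref{l:main} (necessity direction) applied on $G_i$ already supplies a bad invariant subspace of $H^2(G_i)$, which one promotes to $H^2(G)$ by taking its direct sum with all of $H^2(G_j)$, $j\neq i$; a generator $u$ on $G$ would restrict to a generator on $G_i$, a contradiction. If 3) fails, then by Theorem~\ref{t:out} there is an outer $g$ (with $g\chi_{G_i}$ outer on each $G_i$) such that $M:=\vee\{\mathcal P g\}\subsetneq H^2(G)$ is a proper nontrivial invariant subspace; but if $M=\vee\{uH^2(G)\}$ then writing $u=vh$ with $v$ inner and $h$ outer componentwise, the presence of the outer $g$ in $M$ forces $v$ constant on each component as in the proof of Lemma~\ref{l:main}, whence $M=H^2(G)$, a contradiction. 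The genuinely delicate point is hypothesis 2): if the harmonic measures of two components $G_i,G_j$ are not mutually singular, the spaces interact and one can build an invariant subspace $M$ whose ``slices'' over $G_i$ and $G_j$ are linked (e.g.\ via a common boundary behavior) in a way that no single multiplier $u$ can reproduce; making this precise — showing such an $M$ exists and that it resists the form $\vee\{uH^2(G)\}$ — is where I expect the main work to lie, and I would look to model it on the standard two-disk example where overlapping harmonic measures break the Beurling description.
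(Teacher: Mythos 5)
Your sufficiency argument and your treatment of conditions 1) and 3) in the necessity direction track the paper's proof closely: decompose $H^{2}(G)=\bigoplus_i H^{2}(G_i)$, show $\chi_{G_i}M\subset M$, apply Lemma~\ref{l:main} componentwise, and reassemble $u=\sum_i u_i$. One small caution there: knowing $\chi_{G_i}\in P^{2}(\omega)$ does not by itself give $\chi_{G_i}M\subset M$, since multiplication by an $L^{2}$-limit of polynomials need not map $M$ into $M$; you need either a \emph{bounded} (weak-star) net of polynomials converging to $\chi_{G_i}$, which condition 3) supplies, or the paper's route, which applies Theorem~\ref{t:out} to the cyclic subspace $\vee\{pa:p\in{\cal P}\}$ of each $a\in M$ and extracts $a\chi_{G_i}\in M$ from the resulting $uH^{2}(G)$ description.

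The genuine gap is the necessity of condition 2), which you explicitly leave open. Your proposed strategy --- constructing an invariant subspace whose slices over two non-singular components are ``linked'' so that no single multiplier generates it --- is not what the paper does, and it is not clear it can be carried out directly. The paper instead applies the hypothesis to one concrete invariant subspace: the closure $I[R^{2}(K,\omega)]$ of $Rat(K)$ in $H^{2}(G)$, where $K$ is a suitable compact set containing $\overline G$. Writing this as $\vee\{uH^{2}(G)\}$ and testing against the functions $\chi_{G_n}$ forces $\nabla R^{2}(K,\omega)=G$, so the summands $R^{2}(\overline W_i,\omega|\Delta_i)$ in the structure theorem (Theorem~\ref{t:0}) correspond exactly to the components $G_i$. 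Mutual singularity then falls out of the Borel partition: the sets $\Delta_i\subset\partial G_i$ are pairwise disjoint and $[\omega_{i}]=[\omega|\partial G_i]=[\omega|\Delta_i]$, so the $\omega_i$ are mutually singular. Without some such argument your proof of necessity is incomplete; moreover, since Theorem~\ref{t:out} assumes both nice connectedness of the components and mutual singularity of their harmonic measures, your contrapositive treatment of condition 3) cannot even be launched until condition 2) has been established, so the three necessity claims cannot be handled as independently as your write-up suggests.
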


\begin{rem}
1) and 2) together insures that $H^{\infty}(G)$ can be embeded in
$L^{\infty}(\omega)$, where $\omega$ is a harmonic measure of $G$.
So $H^{\infty}(G)$ has a weak-star topology. Without 1) and 2), 3)
may not make sense. However, 1) and 2) do not imply 3). For example,
Let $a_{1}=(1,1)$, $a_{2}= (1,-1)$,
 $a_{3}=(1,i)$ and $a_{4}=(1,\hspace{.02in}-i)$; and let $G_{i}$
be the open disks that has radius 1 and is centered at $a_{i}$,
$i=1,2,3,4$, respectively.  Set $G=\cup_{i=1}^{4} G_{i}$ and set
$\omega=\sum_{i=1}^{4} \omega_{i}$, where $\omega_{i}$ is the
harmonic measure evaluated at $a_{i}$. Then $G$ satisfies conditions
1) and 2).

Let $W$ be the bounded component of the interior of the complement
of $G$ and let $ \mu=\omega|\partial W$. Then $\mu$ is a multiple of
the arclength on $\partial W$. So it follows that
\[W=\nabla P^{2}(\mu) \subset \nabla P^{2}(\omega).
\]
Since $W\cap G =\emptyset$, it follows by Proposition~\ref{p:dens}
that $P^{2}(\omega)\neq H^{2}(G)$. Consequently,
  $P^{\infty}(\omega) \neq H^{\infty}(G)$ (applying  Theorem~\ref{t:out} by
taking $g=1$).
\end{rem}

\begin{proof}
Necessity. Suppose that every $M\in Lat(M_{z})$ is of the form $\vee
\{ uH^{\infty}(G)\}$ for some $u\in H^{2}(G)$. Let $G_{1}, G_{2},
... G_{n}, ...$ be all of the components of $G$ and let $\omega_{n}$
be the harmonic measure for $G_{n}$ so that for each $r\in
Rat(\overline G_{n})$, $\|r\|_{H^{2}(G_{n})} =
\|r\|_{L^{2}(\omega_{n})}$
 for  $n=1,2, ...$  Fix  such an integer $n$ and  let us
 consider $H^{2}(G_{n})$.

Suppose that $N$ is a closed subspace of $H^{2}(G_{n})$ such that $z
H^{2}(G_{n}) \subset H^{2}(G_{n})$. Extend each $f\in N$ to be a
function in $H^{2}(G)$ by letting its value to be zero off $G_{n}$.
 Then $\chi_{G_{n}} N$ would be a closed invariant
subspace for the shift on $H^{\infty}(G)$. Thus, by our hypothesis
$\chi_{G_{n}}  N = \vee\{ u_{n} H^{2}(G)\} $ for some $u_{n} \in
H^{2}(G)$.   Therefore,
\begin{align*} N & 
 = \chi_{G_{n}}\vee\{ \chi_{G_{n}}{u_{n}} H^{2}(G)\} \\&
 =\vee \{(\chi_{G_{n}}u_{n}) H^{2}(G_{n})\}.
\end{align*}
Since $(\chi_{G_{n}}u_{n}) $ is in $H^{2}(G_{n})$, it follows by
Lemma~\ref{l:main} that $G_{n}$ must be a perfectly connected
domain.

Now set $\omega = \sum_{n=1}^{\infty} \frac{1}{2^{n}} \omega_{n}$.
Then $\omega$ is a harmonic measure for $G$.
Choose a compact subset  $K$  that contains $K$ such that the
complement of $K$ has finitely many components. Let $W= \nabla
R^{2}(K,\omega)$. We want to show that $W=G$. For this let $I$ be
the linear operator from $R^{2}(K,\omega)$ to $H^{2}(G)$
 induced by $I(f) = f$ for each $f\in Rat(K)$. Then,
 for every $f\in Rat(K)$
 \[
 \int |f|^{2}d \omega  = \sum_{n=1} \int |f|^{2}d (\frac{1}{2^{n}}\omega_{n} )
= \sum_{n=1} \frac{1}{2^{n}} \|f\|^{2}_{H^{2}(G_{n})} =
\|f\|^{2}_{H^{2}(G)}. \hspace{.1in}
\]
 So $I$ is an isometry. Clearly, $I[R^{2}(K,\omega)]$ is a closed
invariant subspace for the shift on $H^{2}(G)$. By our hypothesis,
there exists a function $u\in H^{\infty}(G)$ such that
\[I[R^{2}(K,\omega)]= \vee\{ u H^{2}(G)\}.
\]
 For each $n$, since $\chi_{G_{n}} \in H^{2}(G)$, it follows that $u\chi_{G_{n}}
\in I[R^{2}(K,\omega)]$. This implies that $u\chi_{G_{n}} \in
H^{\infty}(W)$. Let $U$ be the component of $W$ that contains
$G_{n}$. Then,  we have either $G_{n} = U$ or $u\chi_{G_{n}} $ is
identically zero on $U$. But the later implies that
$I[R^{2}(K,\omega)] = \vee \{uH^{2}(G-G_{n})\}$ and  this is clearly
impossible.
Hence we have that $U=G_{n}$.

Let $\{W_{i}\}_{i=1}^{\infty}$ be the collection of the components
of $W$. By Theorem~\ref{t:0} ,  there exists a $Borel$ partition $\{
\Delta_{i}\}_{i=0}^{\infty} $ of the support of $\omega$ such that
 \[ R^{2}(K,\omega) = R^{2}( \overline W_{1},\omega|\Delta_{1})
... \oplus R^{2}( \overline W_{i},\omega | \Delta_{i}) \oplus ...
\]
Moreover, $\overline{W}_{i} \supset  \Delta_{i} $ and
$(\omega|\Delta_{i})|\partial W_{i} \ll \omega_{W_{i}}$, where
$\omega_{W_{i}}$ is a harmonic measure for $W_{n}$; the map
$f\rightarrow \hat f $ is an isomorphism and weak-star homeomorphism
from $R^{2}(\overline W_{i}, \omega|\Delta_{i}) \cap
L^{\infty}(\omega|\Delta_{i})$ onto $H^{\infty}(W_{i})$.

 Since $\chi_{\Delta_{i}}\in R^{2}(K, \omega)$
for $i \geq 1$ and
 since $\{\Delta_{i}\}_{i=1}^{\infty}$ are pairwise disjoint,
it follows from  the bounded convergence theorem that $ 1=\lim
\sum_{i} \chi_{\Delta{i}} \hspace{.021in}\mbox{ in} \hspace{.02in}
R^{2}(K,\omega). $ Note that $\hat {f} = I (f)\hspace{.1in}\mbox{
for every} \hspace{.1in} f \in Rat(K). $
 Choose a sequence $\{f_{n}\} $ in $Rat(K)$ so that $f_{n} \rightarrow
\chi_{\Delta_{i}}$ in $L^{2}(\omega)$. Then $f_{n} \rightarrow
\widehat{\chi_{\Delta{i}}}$ uniformly compact subset of $W$.  Thus,
it follows that
\[I(\Delta_{i}) = \widehat{\Delta_{i}} = \chi_{W_{i}}
\hspace{.1in} \mbox {for each} \hspace{.1in}i.
\]
But $I(\Delta_{i}) \in H^{\infty}(G)$ and thus we conclude that
$\chi_{W_{i}} $ belongs to $H^{2}(G)$ for each $i$. This means that
$ \chi_{W_{i}}$ must be equal  to $\chi_{G_{j}}$ for some j.
Consequently, $W_{i}=G_{j}$. Therefore, it follows that $W = G$.

Now, since $\cup W_{i} = W = \cup G_{i} $ and and since each $G_{i}$
is a component of $W$, rearranging the indexes if necessary, we may
assume that $W_{i} = G_{i}$ for each $i$. From the definition of
$\omega$, it is clear that
\[\omega_{i}=\omega_{G_{i}} \ll \omega|\partial G_{i}.
\]
On the other hand, according to Theorem~\ref{t:0}, $\omega|\partial
G_{i} \ll \omega_{G_{i}}$. Hence, we have that $[\omega_{i}] =
[\omega|\partial G_{i}]\hspace{.021in}\mbox{ for each }
\hspace{.02in}i\geq 1. $ Since $\Delta_{i} \subset \partial G_{i}$
and since $\omega|\overline \Delta_{i} $ and $ \omega|\overline
\Delta_{j}$ are mutually singular,
  it follows that $\omega_{i}$ and $\omega_{j}$ are
mutually singular if $i\neq j$. Therefore, we conclude that the
harmonic measures of the components of $G$ are mutually singular.

Lastly, we want to show $P^{\infty}(G)=H^{\infty}(G)$. Suppose the
contrary. It follows from Theorem~\ref{t:out} that there exist an
outer function $g$ in $H^{2}(G)$ such that $\cal P g$ is not desne
in $H^{2}(G)$. Set $X= \vee\{pg: p\in \cal P\}$. Then $X$ is a
nontrivial invariant subspace of $M_{z}$. So there is  $u \in
H^{\infty}(G)$ such that $X= \vee \{u H^{2}(G)\}$. Let $u=vh$ be the
inner and outer factorization, then  $X=v H^{2}(G)$. Let $\{f_{n}\}
\subset H^{2}(G)$ be a sequence such that $\lim vf_{n}=g$ in
$H^{2}(G)$. Thus, we have $\lim g\chi_{\partial G_{i}} =v
f\chi_{\partial G_{i}}$ for each $i$. Since $g\chi_{\partial G_{i}}$
is outer in $H^{2}(G_{i})$, it follows by the Buerling's theorem
that $v\chi_{\partial G_{i}}$ is a constant. Consequently, we get
that $M= H^{2}(G)$.  This contradiction shows that $\cal P$ must be
weak-star dense in $P^{\infty}(G)=H^{\infty}(G)$.

Sufficiency.

Suppose $G$ is such that each of its components is perfectly
connected, the harmonic measures of its components are mutually
singular and $P^{\infty}(\omega)= H^{\infty}(G)$.
 By  Proposition~\ref{p:dens}, we have $P^{2}(\omega) = H^{2}(G)$.
 In particular, $\chi_{\partial G_{i}} \in P^{2}(\omega)$
for each $i \geq 1$. Thus, we have
\begin{align*}
P^{2}(\omega) &=P^{2}(\omega|\partial G_{1})\oplus ... \oplus
P^{2}(\omega|\partial G_{i})\oplus ... \\& =  H^{2}(G_{1})\oplus ...
\oplus H^{2}(G_{i}) \oplus ... \\& = H^{2}(G).
\end{align*}
Suppose that  $M$ is a closed $z$ invariant subspace of $H^{2}(G)$.
Pick a function $a\in M$ and set $M_{a}= \vee\{pa: p\cal P\}$. Let
$a=uh$ be the inner and outer factorization. Then it follows by
Theorem~\ref{t:out} that $M_{a} = uH^{2}(G)$. Since $h\chi_{G_{i}}
\in H^{2}(G)$, we see that $m\chi_{G_{i}} = uv\chi_{G_{i}} \in M$
for each $i \geq $. Set $M_{i} = M\chi_{G_{i}}$. Then we have
\begin{align*}
M &\subset M_{1}+ ... + M_{i} + ... \subset \overline {M_{1}}+ ... +
\overline {M_{i}}+ ... \\& \subset \overline {M_{1}}\oplus ...
\oplus \overline {M_{i}}\oplus ... \\& \subset M.
\end{align*}
Hence, $M_{i}$ is a closed subspace for each $i\geq 1$. Evidently,
$z M_{i}= \chi_{G_{i}} zM \subset M_{i}$.
 Applying Lemma~\ref{l:main},
we have that $\chi_{G_{n}} M = u_{n} H^{2}(G_{n})$, where $u_{n}=0$
if $M=\{0\}$ and  otherwise  $u_{n} \in H^{\infty}(G_{n})$ with
$|u_{n}|= 1$ a.e. $[\omega_{n}]$ on $\partial G_{n}$.
Extend $u$ to be a function in $H^{2}(G)$ by defining its value to
be zero off $\overline G_{n}$. Then, for integers $n\geq m \geq 1$,
we have that
\begin{align*}
 \|\sum_{i=1}^{n} u_{i} - \sum_{i=1}^{m} u_{i} \|_{H^{2}(G)} &
=(\sum_{i=m}^{n}
\frac{1}{2^{i}}\|u_{i}\|_{H^{2}(G_{i})})^{\frac{1}{2}} \\& \leq
(\sum_{i=m}^{n} \frac{1}{2^{i}})^{\frac{1}{2}} \rightarrow 0
\hspace{.27in} \mbox{as} \hspace{.04in}m,\hspace{.02in}n \rightarrow
\infty.
\end{align*}
Thus, $u=\lim_{n} \sum_{i=1}^{n} u_{i}$ exists. Moreover,
\begin{align*}
M &
= u_{1}H^{2}(G_{1}) \oplus ... \oplus u_{n}H^{2}(G_{n}) \oplus ...
\\& = uH^{2}(G_{1}) \oplus ... \oplus vH^{2}(G_{n}) \oplus ...  \\&
= u H^{2}(G).  \end{align*} Clearly, we have either $u=1$ or $u=0$
on $\partial G_{n}$ for each $n$.


\end{proof}

\end{document}